\begin{document}
\newtheorem{theo}{Theorem}
\newtheorem{exam}{Example}
\newtheorem{coro}{Corollary}
\newtheorem{defi}{Definition}
\newtheorem{prob}{Problem}
\newtheorem{lemm}{Lemma}
\newtheorem{prop}{Proposition}
\newtheorem{rem}{Remark}
\newtheorem{conj}{Conjecture}
\newtheorem{calc}{}

\def\group{{\sf G}} 
\def\pg{{ \sf S}}               
\def\TS{{\bf T}}
\def\NB{{\bf N}}

\def\Z{\mathbb{Z}}                   
\def\Q{\mathbb{Q}}                   
\def\C{\mathbb{C}}                   
\def\N{\mathbb{N}}                   
\def\uhp{{\mathbb H}}                
\def\A{\mathbb{A}}                   
\def\dR{{\rm dR}}                    
\def\F{{\cal F}}                     
\def\Sp{{\rm Sp}}                    
\def\Gm{\mathbb{G}_m}                 
\def\Ga{\mathbb{G}_a}                 
\def\Tr{{\rm Tr}}                      
\def\tr{{{\mathsf t}{\mathsf r}}}                 
\def\spec{{\rm Spec}}            
\def\ker{{\rm ker}}              
\def\GL{{\rm GL}}                
\def\ker{{\rm ker}}              
\def\coker{{\rm coker}}          
\def\im{{\rm Im}}               
\def\coim{{\rm Coim}}            
\def\p{{\sf  p}}
\def\U{{\cal U}}   
\def\k{{\sf k}}                     
\def\ring{{\sf R}}                   
\def\X{{\sf X}}                      
\def\T{{\sf T}}                      
\def\Ts{{\sf S}}
\def\cmv{{\sf M}}                    
\def\BG{{\sf G}}                       
\def\podu{{\sf pd}}                   
\def\ped{{\sf U}}                    
\def\per{{\bf  P}}                   
\def\gm{{\sf  A}}                    
\def\gma{{\sf  B}}                   
\def\ben{{\sf b}}                    

\def\Rav{{\mathfrak M }}                     
\def\Ram{{\mathfrak C}}                     
\def\Rap{{\mathfrak G}}                     

\def\nov{{\sf  n}}                    
\def\mov{{\sf  m}}                    
\def\Yuk{{\sf C}}                     
\def\Ra{{\sf R}}                      
\def\hn{{\sf h}}                      
\def\cpe{{\sf C}}                     
\def\g{{\sf g}}                       
\def\t{{\sf t}}                       
\def\pedo{{\sf  \Pi}}                  

\def\Der{{\rm Der}}                   
\def\MMF{{\sf MF}}                    
\def\codim{{\rm codim}}                
\def\dim{{\rm    dim}}                
\def\Lie{{\rm Lie}}                   
\def\gg{{\mathfrak g}}                

\def\u{{\sf u}}                       

\def\imh{{  \Psi}}                 
\def\imc{{  \Phi }}                  
\def\stab{{\rm Stab }}               
\def\Vec{{\rm Vec}}                 
\def\prim{{\rm  0}}                  

\def\Fg{{\sf F}}     
\def\hol{{\rm hol}}  
\def\non{{\rm non}}  
\def\alg{{\rm alg}}  
\def\tra{{\rm tra}}  

\def\bcov{{\rm \O_\T}}       

\def\leaves{{\cal L}}        

\def\cat{{\cal A}}              
\def\im{{\rm Im}}               

\def\pn{{\sf p}}              
\def\Pic{{\rm Pic}}           
\def\free{{\rm free}}         
\def \NS{{\rm NS}}    
\def\tor{{\rm tor}}
\def\codmod{{\nu}}    

\def\GM{{\rm GM}}

\def\perr{{\sf q}}        
\def\perdo{{\cal K}}   
\def\sfl{{\mathrm F}} 
\def\sp{{\mathbb S}}  

\newcommand\diff[1]{\frac{d #1}{dz}} 
\def\End{{\rm End}}              

\def\sing{{\rm Sing}}            
\def\cha{{\rm char}}             
\def\Gal{{\rm Gal}}              
\def\jacob{{\rm jacob}}          
\def\tjurina{{\rm tjurina}}      
\newcommand\Pn[1]{\mathbb{P}^{#1}}   
\def\Ff{\mathbb{F}}                  

\def\O{{\cal O}}                     
\def\as{\mathbb{U}}                  
\def\ring{{\mathsf R}}                         
\def\R{\mathbb{R}}                   

\newcommand\ep[1]{e^{\frac{2\pi i}{#1}}}
\newcommand\HH[2]{H^{#2}(#1)}        
\def\Mat{{\rm Mat}}              
\newcommand{\mat}[4]{
     \begin{bmatrix}
            #1 & #2 \\
            #3 & #4
       \end{bmatrix}
    }                                

\newcommand\SL[2]{{\rm SL}(#1, #2)}    
\def\gcd{{\rm gcd}}                  

\def\cf{r}   
\def\per{{\sf pm}}

\def\Im{{\rm  Im}}
\def\Re{{\rm  Re}}

\begin{center}
{\LARGE\bf On elliptic modular foliations, II 
}
\\
\vspace{.25in} {\large {\sc Hossein Movasati}}
\footnote{
Instituto de Matem\'atica Pura e Aplicada, IMPA, Estrada Dona Castorina, 110, 22460-320, Rio de Janeiro, RJ, Brazil,
{\tt www.impa.br/$\sim$ hossein, hossein@impa.br}}
\end{center}

\begin{abstract}
We give an example of a one dimensional foliation $\F$ of degree two in a Zariski open set of a four dimensional weighted 
 projective space  which  has only an enumerable set  of algebraic leaves. These  are defined over rational numbers  and are isomorphic  to modular curves  $X_0(d),\ d\in\N$ minus cusp points. 
As a by-product we get  new models for modular curves for which we slightly modify an argument  due to J. V. Pereira and give  closed formulas for elements in 
 their defining ideals. The general belief has been that such formulas do not exist
 and the emphasis in the literature has been on introducing faster algorithms to compute equations for small values of $d$. 
\end{abstract}

\section{Introduction}
One of the central challenges in the theory of holomorphic foliations in compact complex manifolds is to give a criterion or an algorithm which tells us
when a foliation has a first integral. G. Darboux is the founding father of this problem who in \cite{Darboux1978} proves that if a 
foliation in $\Pn 2$ has an infinite number of algebraic leaves then it has a first integral, and hence all its leaves are algebraic. Motivated by Darboux's result 
H. Poincar\'e in \cite{Poincare1897} obtained partial results in this direction assuming  certain type of singularities for the foliation. 
J. P. Jouanolou in \cite{jouanolou79} generalizes  Darboux's result, with some extra conditions, to codimension one foliations, and 
E. Ghys in \cite{ghys00} remarks that such conditions are not necessary.
The spirit of all these results are that of  Darboux: each algebraic leaf
induces an element in a finite dimensional vector space, and the existence of infinite number of such leaves result in many linear relations among such
elements, which will eventually give us the first integral. Similar ideas can be applied for foliation with infinite number of invariant hypersurfaces,
see \cite{correa10}.  The main topic of many recent works has been to find criterion for Darboux integrability of a foliation, 
see \cite{bost01, Pereira2016} 
and the references therein.
Bounding the degree and genus of leaves of holomorphic foliations of fixed degree is another challenge which  is
mainly attributed to P. Painlev\'e and it has attracted many research, see \cite{Ollagnier2001, Pereira2016} and in particular the introduction of 
\cite{li04, LinsNeto2002} for the history 
of this problem. For an excellent expository text on both topics the reader is referred to J. V. Pereira's monograph \cite{Pereira2003}. 

Darboux's theorem in dimensions greater than $3$ is false and the first counterexample is the  
E. Picard's ``\'equation diff\'erentielle curieuse" in
\cite{Picard1889} pages 298-299. 
For a moduli space interpretation of Picard's differential equation see \S\ref{BolsoHaddad2018}. 
Picard uses  an evaluation of the Weierstrass $\wp$ function to give 
an infinite number of algebraic solutions to the Painlev\'e VI equation with a particular parameter, see also Corollary 2 and 3 in \cite{Loray2016}. 
The next class of examples is due to A. Lins Neto in \cite{li04, LinsNeto2002} in which he construct one dimensional foliations in $\Pn 2\times \Pn 1$ with a first integral which 
is the projection in $\Pn 1$ and for an enumerable subset $E$ of $\Pn 1$ the foliations in $\Pn 2 \times \{t\},\  t\in \Pn 1$ has a first integral 
of increasing degree and genus of fibers. For counterexamples in non-algebraic context see \cite{ghys00, Winkelmann2004}. 
There is no classification of foliations in higher dimensions
without a first integral and with only an enumerable set of algebraic leaves with increasing degree and genus.   In this article 
inspired by the concept of join of polynomials in singularity theory, see \cite{arn}, we introduce the self-join of foliations and for one 
example   we show that it gives us a  new counterexample to Darboux's integrability and the problem of bounding the degree and genus of algebraic leaves.
The main ingredient is the geometric interpretation of Ramanujan's differential equation using Gauss-Manin connection of elliptic curves which appears
first in N. Katz's article \cite{ka73} and is fully elaborated in \cite{ho06-3}, see also the lecture notes \cite{ho14}.

\def\v{{\rm v}}  
Let $\F(\v)$ be the  foliation in $(x_2,x_3,y_2,y_3)\in \C^4$ given by:
\begin{align}
\label{14March2018}
\v:=&\left(2x_2-6x_3+\frac{1}{6}(x_2-y_2)x_2\right)\frac{\partial}{\partial x_2}+
 \left(3x_3-\frac{1}{3}x_2^2+\frac{1}{4}(x_2-y_2)x_3\right)\frac{\partial}{\partial x_3}\\
 -&\left(2y_2-6y_3+\frac{1}{6}(y_2-x_2)y_2\right)\frac{\partial}{\partial y_2}-
 \left(3y_3-\frac{1}{3}y_2^2+\frac{1}{4}(y_2-x_2)y_3\right)\frac{\partial}{\partial y_3}.\nonumber
\end{align}
There is no $x_1$ variable and the indices for $x_i$ and $y_i$ are chosen because of their natural weights.
The singular set of the foliation $\F(\v)$  in the weighted projective space 
$\Pn w:=\Pn {2,3,2,3,1} $ 
with the coordinate 
system $[x_2:x_3:y_2:y_3:y_1]$ consists of an isolated point and a rational curve:
\begin{equation}
 \sing(\F(\v))=\{0\}\cup \left \{27x_3^2-x_2^3=27y_3^2-y_2^3= x_3^{\frac{1}{3}}+y_3^{\frac{1}{3}}-2y_1=0 \right \}.
\end{equation}
We parametrize the curve singularity of $\sing(\F(\v))$  by 
\begin{equation}
g: \Pn 1\to \Pn w,\ \ [t:s]\mapsto \left[3t^2:t^3: 3s^2: s^3: \frac{1}{2}(s+t)\right ]. 
\end{equation}
It can be easily checked that the following union of two hypersurfaces 
\begin{align}
& \Delta:=\Delta_1\cup \Delta_2,\\
& \Delta_1:=\{27x_3^2-x_2^3=0\},\ \Delta_2:=\{27y_3^2-y_2^3=0\}
\end{align}
is tangent to $\F(\v)$. 
Recall the modular curve 
\begin{equation}
X_0(d):= \Gamma_0(d)\backslash \uhp^*,\  \ \ d\in\N, 
\end{equation}
where $\uhp^*:=\uhp\cup\Q\cup\{\infty\}$ and 
$$
\Gamma_0(d):=\left\{\mat{a_1}{a_2}{a_3}{a_4}\in \SL 2\Z\Bigg| a_3\equiv 0\ \ ({\rm mod }\ \  d)\right\}.
$$
It has a structure of a compact Riemann surface and points of $\Gamma_0(d)\backslash (\Q\cup \{\infty\})$ are called its cusps. 
\begin{theo}
\label{jorgeduque2018}
The foliation $\F(\v)$ has the following properties:
\begin{enumerate}
\item\label{111}
For each $d$ there is an algebraic curve $S_0(d)$ (defined over $\Q$) in $\Pn w$, not contained in $\Delta$ and
tangent to the vector field $\v$ in \eqref{14March2018} such that $S_0(d)\backslash\Delta$  
is isomorphic  to $X_0(d)$ mines cusps. Moreover, these are the only algebraic leaves
of $\F(\v)$ in the complement of $\Delta$ in $\Pn w$.
The curve $S_0(d)$ intersects $\Delta$ only at the points $g([a:-b]),\ d=ab, \ a,b\in\N$ in the curve singularity of $\F(\v)$.

\item\label{222}
There is an $\F(\v)$-invariant set $M_\R\subset \Pn w$  such that $M_\R\cap \Delta=g(\Pn 1_\R)$, and $M_\R\cap (\Pn w\backslash \Delta)$  
is a real analytic variety of dimension $5$. It contains all $S_0(d)$ and 
\begin{equation}
M_\Q:=\cup_{d=1}^\infty S_0(d)
\end{equation}
is dense  in $M_\R$. Moreover, the foliation $\F(\v)$  has a real first integral $B:\Pn w\backslash \Delta\to\R $ and $M_\R$ is inside $B^{-1}(1)$.

\end{enumerate}
\end{theo}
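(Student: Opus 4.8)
\emph{Plan of proof.} The whole argument rests on reading $\F(\v)$ through the Gauss--Manin connection of elliptic curves, as in \cite{ka73,ho06-3,ho14}. The first step is to recognise $\v$ as the self-join of the Ramanujan vector field $R$ with itself. Writing $R$ for the Halphen--Ramanujan field $\dot t_2=4t_1t_2-6t_3$, $\dot t_3=6t_1t_3-\frac13 t_2^2$ on the moduli of elliptic curves carrying a differential form, with $t_1,t_2,t_3$ the normalised $E_2,E_4,E_6$, a direct comparison of coefficients shows that the $(x_2,x_3)$--block of \eqref{14March2018} is exactly $R$ with $t_2=x_2,\ t_3=x_3$ and $t_1=\frac12 y_1^2+\frac1{24}(x_2-y_2)$, while the $(y_2,y_3)$--block is $-R$ with $t_1'=\frac12 y_1^2-\frac1{24}(x_2-y_2)$; the two copies are coupled through the single relation $t_1+t_1'=y_1^2$, which is precisely what the weight--one coordinate $y_1$ records in $\Pn w$. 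Under this dictionary the two discriminant loci $27x_3^2-x_2^3=0$ and $27y_3^2-y_2^3=0$ are the degeneration loci of the two elliptic curves, and the singular curve $g(\Pn 1)$ is their common degeneration locus.

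Next I would construct the curves $S_0(d)$ from the modular meaning of $\Delta_1,\Delta_2$. A point of $X_0(d)$ is a cyclic $d$--isogeny $\phi\colon E\to E'$; normalising the differentials by $\phi^*\omega'=\omega$ produces a well-defined point $[x_2:x_3:y_2:y_3:y_1]\in\Pn w$, and letting the isogeny vary sweeps out $S_0(d)$. Its tangency to $\v$ is the isogeny--equivariance of the Ramanujan flow (functoriality of the Gauss--Manin connection): flowing $E$ and $E'$ simultaneously keeps them $d$--isogenous and respects $\phi^*\omega'=\omega$, hence preserves the coupling $t_1+t_1'=y_1^2$. Algebraicity over $\Q$ comes from the classical modular polynomial $\Phi_d\in\Z[X,Y]$: on $S_0(d)$ the two $j$--invariants satisfy $\Phi_d(j_x,j_y)=0$, and together with the rational identity normalising $\omega,\omega'$ this cuts out an ideal defined over $\Q$. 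This is exactly the point at which I would revisit the argument of J. V. Pereira \cite{Pereira2003} to read off \emph{closed formulas} for generators of that ideal, expressing them through $E_4,E_6$ and the isogeny rather than by an elimination algorithm. Finally, the cusps of $X_0(d)$ are indexed by the factorisations $d=ab$ and, under the simultaneous degeneration $q\to0$ of both factors, land on the cuspidal curve at the points $g([a:-b])$, giving the stated description of $S_0(d)\cap\Delta$.

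The hardest step, and the main obstacle, is to prove that the $S_0(d)$ exhaust the algebraic leaves off $\Delta$. The plan is: an algebraic leaf $L\not\subset\Delta$ projects, via the period map, to genuine non-constant Ramanujan solutions in each factor, so the two $j$--invariants restrict to non-constant algebraic functions on $L$ and therefore satisfy an irreducible polynomial relation $P(j_x,j_y)=0$ cutting out a curve $C$ in the product of two $j$--lines. The $\v$--invariance of $L$ forces $C$ to be invariant under the diagonal modular (Hecke) flow, and a slight modification of Pereira's rigidity argument, combined with the classification of modular correspondences, shows that the only such algebraic $C$ are the Hecke curves $\{\Phi_d=0\}$; pulling back gives $L=S_0(d)$ and the isomorphism $S_0(d)\setminus\Delta\cong X_0(d)\setminus\{\text{cusps}\}$. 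I expect the rigidity of the diagonal flow to be the genuine difficulty, since it is what rules out accidental algebraic leaves coming from transcendental $C$.

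For part \ref{222} I would exploit the antisymmetry between the two blocks ($R$ against $-R$). Reversing the time of the second factor turns a real combination of the two ``Ramanujan times'' into a conserved quantity of the real flow; concretely I expect the first integral $B\colon\Pn w\setminus\Delta\to\R$ to arise from the period/polarisation comparison between the de Rham and Betti cohomologies of the two curves, normalised so that every isogenous pair lies on the level $B\equiv1$. One then defines $M_\R$ to be the real--analytic component of $B^{-1}(1)$ swept out by these configurations; checking $\v(B)=0$ by a direct computation, that $\dim_\R M_\R=5$, and that $M_\R\cap\Delta=g(\Pn 1_\R)$ is bookkeeping once the dictionary of the first paragraph is fixed. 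Since each $S_0(d)$ is a Hecke leaf it lies in $B^{-1}(1)$, so $M_\R$ contains all of them, and the density of $M_\Q=\cup_d S_0(d)$ in $M_\R$ follows from the density (equidistribution) of Hecke points as $d\to\infty$. The two delicate points in this part are pinning down the explicit real--analytic form of $B$ and making the density statement rigorous.
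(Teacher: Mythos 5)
Your construction of the curves $S_0(d)$ follows the paper's route (the self-join reading of $\v$, isogeny loci, tangency via the Ramanujan/Gauss--Manin picture, cusps from $q$-expansions), but at the two places where the theorem has real content your plan either delegates the work to a vague citation or would fail outright. For the exhaustion statement --- that the $S_0(d)$ are the \emph{only} algebraic leaves off $\Delta$ --- you invoke ``a slight modification of Pereira's rigidity argument, combined with the classification of modular correspondences.'' The Pereira reference is misplaced: in this paper his argument is the Wronskian trick behind Theorem~\ref{16may2018} (vanishing of $\det B_{d,i}$ on a curve already known to be invariant); it produces equations for given leaves, it does not classify which leaves are algebraic. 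What actually does the work in the paper is the period-map identification $\T\cong\ped$ together with the matrix-valued multivalued first integral $F(x,y)=y\cdot x^{-1}$ of \eqref{div2018}: the leaf space of $\F+\F$ is the double quotient \eqref{deligne2018} (Proposition~\ref{fazendinha2018}), a leaf $F^{-1}(A)$ is closed iff $\#\,\SL 2\Z\backslash \SL 2\Z A\,\SL 2\Z<\infty$, and this finiteness forces $A$ to be rational up to a scalar, i.e.\ the leaf is some $V_d$ (Proposition~\ref{twoyearsago}). Your $j$-invariant route can be completed, but it reduces to exactly the same lemma: from $P\left(j(\tau),j(A\tau)\right)\equiv 0$, substituting $\tau\mapsto\gamma\tau$ with $\gamma\in\SL 2\Z$ realizes every $j(A\gamma\tau)$ as one of the finitely many roots of $P(j(\tau),\cdot)$, and distinct such functions are indexed by $\SL 2\Z\backslash \SL 2\Z A\,\SL 2\Z$; the arithmetic step ``finiteness $\Rightarrow A\in\C^*\cdot\GL(2,\Q)$'' is the entire content hidden in your phrase ``classification of modular correspondences,'' and it must be proved, not waved at. A smaller defect: the single normalization $\phi^*\omega'=\omega$ does not produce a well-defined point of $\Pn w$; modulo the diagonal $\BG$-action on de Rham frames a one-parameter ambiguity remains, so your locus is a surface, not a curve. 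One needs conditions on the pullback of the \emph{full} frame, with the $d^{\pm\frac{1}{2}}$ weights, as in \eqref{omidpiano2018} and \eqref{CMPDavarNabood2017}.

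Part~\ref{222} is where the gap is fatal as stated. You define $M_\R$ as ``the real-analytic component of $B^{-1}(1)$ swept out by these configurations,'' but $B^{-1}(1)$ is a real hypersurface of $\Pn w\backslash\Delta$ (real dimension $7$), while the theorem asserts $\dim_\R M_\R=5$. Indeed $B^{-1}(1)$ is strictly larger: on the level of the first integral, any point with $F=\mat{a}{ib}{0}{a^{-1}}$, $a\in\R^*$, $b\in\R$, satisfies $B=1$, and such points accumulate on the real locus, so $M_\R$ is not even open in $B^{-1}(1)$ and cannot be recovered as a component of it. In the paper both objects are explicit and both come from $F$: the Levi-flat $M_\R=\{\Im(F)=0\}$ of \eqref{desejos2018} and $B=\frac{1}{2}\Tr(F\bar F^{-1})$ (well defined because $F\bar F^{-1}$ kills the right $\SL 2\Z$-action and the trace kills the left one); every assertion of part~\ref{222} is then matrix bookkeeping pushed through the diagonal $\BG$-quotient. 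Your proposed $B$ (``period/polarisation comparison'') is never pinned down, so neither $\v(B)=0$, nor the dimension count, nor $M_\R\cap\Delta=g(\Pn 1_\R)$ can be checked. Finally, density of $M_\Q$ in $M_\R$ needs no equidistribution of Hecke points (which concerns fibers over a fixed modulus, not the Levi-flat): since $M_\R$ corresponds to $F\in\SL 2\R$ and $M_\Q$ to $F\in\sqrt{\Q^+}\,\GL^+(2,\Q)$, it is the elementary density of positive scalar multiples of positive-determinant rational matrices in $\SL 2\R$.
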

The topic of computing equations for  modular curves has also a long history with fruitful applications in number theory, 
see \cite{yui78, Galbraith1996, Yang2006}  
and the references therein.
It seems to the author that no expert in this area believe on  the existence of models for modular curves over $\Q$  with  closed formulas 
for its defining equations and therefore,  the emphasis has been on computing them using $q$-expansion of modular forms. 
We can also give an explicit description of $S_0(d)$ in terms of modular forms 
for $\Gamma_0(d)$,  see the map described in \eqref{4apr2018}. However, this is not the main focus of the present text. 
The most surprising aspect of the vector field $\v$  in \eqref{14March2018} is that it gives closed formulas for equations for $S_0(d)$ for arbitrary $d$. This is 
as follows. Let  $\psi(d):=d\prod_{p}(1+\frac{1}{p})$ be the Dedekind 
$\psi$ function, where $p$ runs through primes $p$ dividing $d$, and for $i=1,2,3$ let 
$\alpha_{i,j}, j=1,2,\ldots,m_{d,i} $  be the set of  monomials: 
\begin{equation}
\label{8july2011}
y_i^{a_1}x_2^{a_2}x_3^{a_3},\ i\cdot \psi(d)=ia_1+2a_2+3a_3,\ a_1,a_2,a_3\in\N_0.
\end{equation}
For $i=1$ let us consider such monomials with $y_1=1$. 
We consider $\v$ as a derivation from  $\Q[x_2,x_3,y_2,y_3]$ to itself and define the matrix : 
\begin{equation}
\label{17.05.2018}
B_{d,i}(x_2,x_3,y_2,y_3)=
\begin{bmatrix}
 \alpha_{i,1}&\alpha_{i,2}&\cdots&\alpha_{i,m_{d,i}}\\
 \v(\alpha_{i,1})&\v(\alpha_{i,2})&\cdots&\v(\alpha_{i,m_{d,i}})\\
 \vdots &\vdots&\ddots&\vdots\\
 \v^{m_{d,i}-1}(\alpha_{i,1})&\v^{m_{d,i}-1}(\alpha_{i,2})&\cdots&\v^{m_{d,i}-1}(\alpha_{i, m_{d,i}})\\
\end{bmatrix}.
\end{equation}
The entries of $B_{d,i}$ are polynomials in $\Z[\frac{1}{6}][ x_2,x_3,y_2,y_3]$.
\begin{theo}
\label{16may2018}
The  polynomials  $J_{d,i}:=\det B_{d,i}, \ \ i=1,2,3$ restricted to $S_0(d)$ vanish. 
\end{theo}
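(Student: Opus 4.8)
The plan is to deduce Theorem~\ref{16may2018} from the tangency of $S_0(d)$ to $\v$ established in Theorem~\ref{jorgeduque2018}, via an elementary Wronskian principle (a small variant of an argument of J.~V.~Pereira), reducing the whole statement to the existence of a single polynomial relation among the chosen monomials. Since $S_0(d)$ is an algebraic leaf, $\v$ is tangent to it and restricts to a derivation $D$ of the coordinate ring of $S_0(d)\setminus\Delta$, so that $\v^{k}(f)|_{S_0(d)}=D^{k}(f|_{S_0(d)})$ for every $f\in\Q[x_2,x_3,y_2,y_3]$ and every $k\ge0$. Suppose we have found constants $c_j$, not all zero, with $\sum_j c_j\,\alpha_{i,j}|_{S_0(d)}=0$. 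Applying $D$ repeatedly and using tangency, $\sum_j c_j\,\v^{k}(\alpha_{i,j})|_{S_0(d)}=D^{k}\bigl(\sum_j c_j\,\alpha_{i,j}|_{S_0(d)}\bigr)=0$ for all $k$; hence the nonzero constant vector $(c_j)$ lies in the kernel of $B_{d,i}$ at every point of $S_0(d)$, and therefore $J_{d,i}=\det B_{d,i}$ vanishes on $S_0(d)$. Thus it suffices to produce one nonzero $\C$-linear relation among the $\alpha_{i,j}|_{S_0(d)}$.

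For fixed $i$ the monomials $\alpha_{i,j}$ are precisely all monomials of weighted degree $i\psi(d)$ in the three variables $x_2,x_3,y_i$ (of weights $2,3,i$), with $y_1$ set equal to $1$ when $i=1$. A constant-coefficient relation among them is the same as a weighted-homogeneous polynomial $F_i(x_2,x_3,y_i)$ of degree $i\psi(d)$ vanishing on $S_0(d)$, that is, a degree-$i\psi(d)$ element of the homogeneous ideal of the image curve $C_i:=\overline{[x_2:x_3:y_i]\,(S_0(d))}\subset\Pn{2,3,i}$. As $C_i$ is an irreducible curve in a two-dimensional weighted projective space, its affine cone is a two-dimensional subvariety of $\spec\C[x_2,x_3,y_i]=\A^3$, i.e. a height-one prime; since $\C[x_2,x_3,y_i]$ is a unique factorization domain, this ideal is principal, generated by a single weighted-homogeneous irreducible $F_i$. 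Everything now rests on the bound $\deg F_i\le i\psi(d)$.

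To bound the degree I use the modular description of $S_0(d)$ (Theorem~\ref{jorgeduque2018} and the map \eqref{4apr2018}): $x_2,x_3$ are the Weierstrass invariants of the first elliptic curve up to a common scaling, so $[x_2:x_3]\in\Pn{2,3}\cong\Pn1$ records only its $j$-invariant, via $j=1728\,x_2^3/(x_2^3-27x_3^2)$ with $x_2^3-27x_3^2=-\Delta_1$, and the induced map $S_0(d)\to\Pn1$ is the forgetful cover $\tau\mapsto j(\tau)$, $X_0(d)\to X_0(1)$, of degree $\psi(d)=[\SL 2\Z:\Gamma_0(d)]$. By Theorem~\ref{jorgeduque2018}, $S_0(d)$ meets $\Delta$ only at the points $g([a:-b])$ with $ab=d$, where $x_2=3a^2\ne0$, while on $S_0(d)\setminus\Delta$ the invariants $x_2,x_3$ restrict to scalings of $E_4,E_6$, which have no common zero; hence $x_2,x_3$ never vanish simultaneously on $S_0(d)$ and $C_i$ avoids the vertex $[0:0:1]$ of $\Pn{2,3,i}$. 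Projection from that vertex then realizes $C_i\to\Pn1$ as a finite map of degree $\deg_{y_i}F_i$, which divides $\psi(d)$; and $F_i(0,0,y_i)\not\equiv0$ forces the pure power $y_i^{\deg_{y_i}F_i}$ to occur, so by weighted homogeneity $\deg F_i=i\cdot\deg_{y_i}F_i\le i\psi(d)$. When this inequality is strict I multiply $F_i$ by a monomial of the missing weight to reach degree exactly $i\psi(d)$; this yields the required relation, and the first paragraph completes the proof.

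The two structural ingredients—the Wronskian principle and the principality of the ideal of a plane curve—are soft, and the genuine content is concentrated in the degree bound of the third step, which is exactly where the Dedekind $\psi$-function of the statement enters. The point I expect to require the most care is the modular identification of the restrictions $x_2|_{S_0(d)},x_3|_{S_0(d)},y_i|_{S_0(d)}$ furnished by \eqref{4apr2018}, through which one reads off that $[x_2:x_3]$ detects only $j(\tau)$ and that the resulting map to the $j$-line has degree $[\SL 2\Z:\Gamma_0(d)]=\psi(d)$; one must also discard the small values of $d$, for which $\psi(d)$ is too small for the construction to force a relation.
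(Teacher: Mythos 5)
Your proof is correct, and its Wronskian half is exactly the paper's: both adapt Pereira's argument (\cite{Pereira2001}, Proposition 1) to propagate a single constant-coefficient relation $\sum_j c_j\alpha_{i,j}|_{S_0(d)}=0$ through the powers $\v^k$ via tangency, forcing $B_{d,i}$ to have the fixed kernel vector $(c_j)$ along $S_0(d)$ and hence $\det B_{d,i}|_{S_0(d)}=0$. Where you genuinely diverge is in producing that relation. The paper obtains it from Hecke-operator theory: it forms $P_f(x)=\prod_{A\in\Gamma\backslash\Gamma A_d\Gamma}\left(x-f|_k^0A\right)$ (with the quasi-modular variant for $g_1$), whose coefficients are level-one modular forms and hence polynomials in $g_2,g_3$ over $\Q$; since $\#\left(\Gamma\backslash\Gamma A_d\Gamma\right)=\psi(d)$, this gives the explicit relations $Q_{d,i}$ of weighted degree $i\psi(d)$ in \eqref{21may2018}, citing \cite{ho14-II}. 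You instead argue geometrically: the image curve $C_i\subset\Pn{2,3,i}$ has principal homogeneous ideal (height-one prime in the UFD $\C[x_2,x_3,y_i]$), and the degree of its generator $F_i$ is bounded by $i\psi(d)$ because the projection from the vertex realizes $C_i\to\Pn 1$ with degree $\deg_{y_i}F_i$ dividing the degree $\psi(d)$ of the forgetful map $X_0(d)\to X_0(1)$, while vertex-avoidance ($(x_2,x_3)\neq(0,0)$ on $S_0(d)$, checked both off $\Delta$ via $E_4,E_6$ and at the points $g([a:-b])$) gives $\deg F_i=i\deg_{y_i}F_i$. Both routes are valid; yours is softer, avoids Hecke theory entirely, and suffices for Theorem \ref{16may2018}, whereas the paper's construction buys strictly more — rationality over $\Q$, the normalization of the coefficient of $y_i^{\psi(d)}$, and the complete-intersection description of $S_0(d)$ — which the paper needs later (Theorem \ref{saket2018} and \eqref{21may2018}). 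Two cosmetic points: the derivation $D$ should live on the coordinate ring of the affine part $S_0(d)\cap\C^4$ (not literally of $S_0(d)\setminus\Delta$), which is harmless since vanishing on a dense open subset of an irreducible curve suffices; and your closing worry about small $d$ is unfounded — your argument involves no dimension count that could fail, and the only degenerate case is $d=1$, where $S_0(1)$ lies in the hyperplane at infinity $\{y_1=0\}$, so the affine statement is vacuous.
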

The polynomial $J_{d,i}$  is huge, and it might be difficult or impossible 
to use $J_{d,i}$ for computational purposes. However, we can also  give defining equations for $S_0(d)$ with some conditions on an isogeny which can be verified computationally. 
This is as follows.
Using  Hecke operators, one can see that $S_0(d)$ in $\Pn w\backslash \Delta$ is a complete intersection given by three polynomials of the form 
\begin{equation}
\label{21may2018}
Q_{d,i}:=\sum_{j=1}^{m_{d,i}}  c_{i,j}\alpha_{i,j},\ i=1,2,3,\  c_{i,j}\in\Q
\end{equation}
and further we can assume that the coefficient of $y_i^{\psi(d)}$ in $Q_{d,i}$ is $1$. 
Let $E_1: y^2=4x^3-t_2x-t_3$ and $E_2: y^2=4x^3-s_2x-s_3$ be two elliptic curves in the Weierstrass format and let 
$f: E_1\to E_2$ be an isogeny with cyclic kernel of order $d$, all defined over $\C$. We can compute the numbers $k,k'\in\C$ from the equality 
\begin{equation}
\label{omidpiano2018}
 \begin{bmatrix}
  f^*\frac{dx}{y}, &
  f^*\frac{xdx}{y}  
 \end{bmatrix}=
\begin{bmatrix}
  \frac{dx}{y}, &
  \frac{xdx}{y}  
 \end{bmatrix}
\mat{k d^{\frac{1}{2}}}{k'd^{-\frac{1}{2}}}{0}{k^{-1}d^{-\frac{1}{2}}}
\end{equation}
 where $f^*: H^1_\dR(E_2)\to H^1_\dR(E_1)$ is the induced map in de Rham cohomologies,  see for instance \cite{ked08} or \cite{ho14}. 
\begin{theo}
\label{saket2018}
We have 
\begin{enumerate}
\item 
$\left[t_2: t_3: s_2: -s_3:   k^{-1}k' \right]\in S_0(d)\backslash \Delta$ and
any point of this set is obtained in this way. 
 \item 
 Assume that for an isogeny of elliptic curves as above we have $k'\not=0$ and define 
\begin{equation}
\label{26may2018}
p:= \left (t_2 k^2{k'}^{-2}, t_3 k^3{k'}^{-3},  s_2k^2{k'}^{-2}, -s_3 k^3{k'}^{-3}\right)\in\C^4 
\end{equation}
Let $C_{d,i}$ be the $m_{d,i}\times 1$ matrix with entries $c_{i,j},\ j=1,2,\ldots,m_{d,i}$ which are coefficients of $Q_{d,i}$ in \eqref{21may2018}. It satisfies 
$$
B_{d,i}(p)C_{d,i}=0.
$$
\end{enumerate}
\end{theo}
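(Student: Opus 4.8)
The plan is to prove the two parts in turn: the first rests on the moduli-theoretic description of $S_0(d)$ produced in Theorem~\ref{jorgeduque2018}, while the second follows formally from that description together with the tangency of $\v$.

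For part 1, I would use the identification of $S_0(d)\setminus\Delta$ with $X_0(d)$ minus its cusps, i.e. with the moduli of triples $(E_1,E_2,f)$ where $f\colon E_1\to E_2$ is a cyclic isogeny of degree $d$. Writing $E_1:y^2=4x^3-t_2x-t_3$ and $E_2:y^2=4x^3-s_2x-s_3$, the pairs $(x_2,x_3)$ and $(y_2,y_3)$ are the Weierstrass invariants of $E_1$ and $E_2$, while the weight-one coordinate $y_1$ is built from the isogeny by reading off $k,k'$ from the action $f^*$ on de Rham cohomology in \eqref{omidpiano2018} and setting $y_1=k^{-1}k'$. I would then verify that a single $\C^*$-rescaling of the chosen frame $(\frac{dx}{y},\frac{xdx}{y})$ induces exactly the weights $(2,3,2,3,1)$, so that $[t_2:t_3:s_2:-s_3:k^{-1}k']$ is a well-defined point of $\Pn w$; it lies in $S_0(d)\setminus\Delta$ precisely because $E_1,E_2$ have nonvanishing discriminant. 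Surjectivity is then the classical moduli interpretation of $X_0(d)$. The delicate normalizations here — the sign $-s_3$ in the fourth slot, the precise combination $k^{-1}k'$ in the fifth, and the fact that one parameter governs the scaling of both curves — are exactly the points one must match against the sign-reversal in the $y$-block of $\v$ in \eqref{14March2018}, and this matching is the main obstacle of the proof.

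For part 2, assume $k'\neq0$. Taking $\lambda=k{k'}^{-1}$ in the weighted $\C^*$-action dehomogenizes the point of part~1 to its unique representative with $y_1=1$; a direct check against the weights $(2,3,2,3,1)$ identifies this representative with the point $p$ of \eqref{26may2018}. Hence $p$ is a point of $S_0(d)\cap\{y_1=1\}\subset\C^4$, away from $\Delta$. Now the $\ell$-th entry of the column $B_{d,i}(p)C_{d,i}$ is $\sum_{j}\v^{\ell-1}(\alpha_{i,j})(p)\,c_{i,j}$; since the $c_{i,j}$ are constants and $\v$ is a derivation, this equals $\bigl(\v^{\ell-1}Q_{d,i}\bigr)(p)$, where $Q_{d,i}=\sum_j c_{i,j}\alpha_{i,j}$ is the defining equation \eqref{21may2018}.

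It remains to see that every $\v^{\ell-1}(Q_{d,i})$ vanishes at $p$. The function $Q_{d,i}$ vanishes on $S_0(d)$ by construction, and by Theorem~\ref{jorgeduque2018}\eqref{111} the field $\v$ is tangent to $S_0(d)$; thus whenever a regular function $h$ vanishes on $S_0(d)$ so does $\v(h)$, and applying this inductively to $h=Q_{d,i},\v(Q_{d,i}),\dots$ shows $\v^{\ell-1}(Q_{d,i})$ vanishes on $S_0(d)$ for every $\ell$. Evaluating at $p\in S_0(d)$ annihilates every entry, so $B_{d,i}(p)C_{d,i}=0$. I would note that this refines Theorem~\ref{16may2018}: since $C_{d,i}\neq0$ the square matrix $B_{d,i}(p)$ is singular, so $J_{d,i}=\det B_{d,i}$ vanishes at each such $p$, and these $p$ are dense in $S_0(d)$. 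All the genuine work is in part~1; part~2 is then purely formal.
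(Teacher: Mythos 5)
Your part 2 is correct and is exactly the paper's argument: the $\ell$-th entry of $B_{d,i}(p)C_{d,i}$ is $\v^{\ell-1}(Q_{d,i})(p)$ by linearity of the derivation, and tangency of $\v$ to $S_0(d)$ propagates the vanishing of $Q_{d,i}$ on $S_0(d)$ to all iterates $\v^{r}(Q_{d,i})$; the paper simply says ``repeat the proof of Theorem \ref{16may2018} at $p$''. The problem is part 1, where you have a genuine gap rather than a different route. You invoke the identification of $S_0(d)\setminus\Delta$ with the moduli of cyclic $d$-isogenies and then \emph{define} the point $[t_2:t_3:s_2:-s_3:k^{-1}k']$, checking only that it is a well-defined point of $\Pn w$ not lying on $\Delta$ (nonvanishing discriminant gives the latter, nothing more). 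That the point actually lies on $S_0(d)$ is the entire content of the statement, and you explicitly defer it (``this matching is the main obstacle of the proof'') without carrying it out. Well-definedness under the weighted $\C^*$-action cannot substitute for this: the abstract isomorphism of Theorem \ref{jorgeduque2018} does not by itself tell you which coordinate formula realizes it.

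What is missing is the bridge the paper builds through the group $\BG$ and the variety $V_d$. Concretely: write $E_1=E_{0,t_2,t_3}$, $E_2=E_{0,s_2,s_3}$ as points of $\T$ in the three-parameter family \eqref{khodaya-1}; rewrite \eqref{omidpiano2018} as $f^*=\g\cdot A_d$ with $\g\in\BG$ as in \eqref{2nov10} and $A_d$ as in \eqref{zoorekhar2018}; then absorb $\g$ by replacing $(0,t_2,t_3)$ with $(0,t_2,t_3)\bullet\g$, so that the isogeny satisfies exactly the normalization \eqref{CMPDavarNabood2017}, i.e.\ the pair $(t,s)$ lies in $V_d$ by Remark \ref{edemgurilandiaflamengo2018}. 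Since the proof of Theorem \ref{jorgeduque2018} realizes $S_0(d)$ as the quotient $V_d/\BG$, passing to the quotient coordinates \eqref{17may2018} produces the point \eqref{26may2018}; in particular the weight-one coordinate $k^{-1}k'$ is not decreed but arises as the difference $t_1-s_1$ after the $\BG$-twist, which also explains the sign on $s_3$. Surjectivity then follows because every point of $S_0(d)$ lifts to a point of $V_d$, and every point of $V_d$ is by definition such an isogeny locus point — not from the ``classical moduli interpretation'' alone, which never mentions de Rham frames. Without this mechanism your part 1 asserts the theorem rather than proves it, and your part 2, though formally correct, rests on the unproved claim that $p\in S_0(d)$.
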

If the matrix $B_{d,i}(p)$ has rank $m_{d,i}-1$ then Theorem 
\ref{saket2018} determines $C_{d,i}$, and hence $Q_{d,i}$, uniquely (we have already normalized one of the coefficients of $Q_{d,i}$ to $1$). This will give a simple formula for the coefficients 
of $Q_{d,i}$ in terms of the inverse of a $(m_{d,i}-1)\times (m_{d,i}-1)$ sub matrix of $B_{d,i}$. If the isogeny for constructing the point $p$ 
is defined over a number field $k$ then we may use the fact that $c_{i,j}$'s are rational numbers and invent other formulas by decomposing $B_{d,i}(p)$ in a basis of $k/\Q$. 
In  \cite{lmfdb} we can find many examples 
of isogenies which might be used for this discussion,  see also \cite{hus04} page 96 for an explicit formula of a $2$-isogeny.

My heartfelt thanks go to J. V. Pereira from whom I learned the up-to-date status of Darboux's integrability theorem and many references in this article.
Thanks also  go F. Loray who informed me about Picard's example and his interpretation of this in terms of connections. This work is prepared when I was
giving a series of lectures on my book "Modular and automorphic forms $\&$ beyond" at IMPA. My since thanks go to both the institute and the audience. I would also like
to thank A. Salehi Golsefidy regarding his comments on a question posed in \S\ref{12may2018}. Finally, I would like to thank P. Deligne 
who pointed out a wrong formulation in \S\ref{11may2018} in one of the earlier drafts of the present text.

\section{Self join of foliations}
\label{11may2018}
\def\BG{{\sf G}}
Let $\Theta$ be a $\C$-vector space generated by global vector fields in a complex manifold $\T$ and assume that it is closed under Lie bracket.
Let also $\F=\F(\Theta)$ be the induced holomorphic foliation in $\T$.
For any vector field $\v\in\Theta$ we attach two vector fields 
$v_i,\ i=1,2$ in $\T\times \T$. For instance, $v_1$ is uniquely determined by the fact that it is tangent to $\T\times \{x\}, x\in \T$ and under  
$\T\times \{x\}\cong \T$ it is identified with $v$.
The self join $\F+\F$
of $\F$ is a foliation in $\T\times \T$ which is given by vector fields $v_1+v_2$ for all $v\in \Theta$. 
If $\F$ is of dimension $c$ then its self join is also of dimension $c$, however, note that the 
codimension of the self join $\F+\F$ is twice the codimension of $\F$. 
The self join leaves the diagonal of $\T\times \T$ invariant. The concept of self join is inspired from a similar definition in singularity theory, see \cite{arn}. 
A self join of a foliation with trivial dynamics
might have complicated dynamics. In this text we will consider self join of foliations $\F$ with a left action $\bullet$ of an algebraic group $\BG$ on $\T$ and hence we can identify 
$\Lie(\BG)$ with a $\C$-vector space of global vector fields in $\T$. We assume that  $\Lie(\BG)\subset \Theta$ and hence the action of $\BG$ leaves the leaves of $\F$ invariant. 
It turns out that the diagonal action of $\BG$ on $\T\times \T$
\begin{equation}
\label{diagaction}
(\T\times \T) \times \BG\to \T\times \T,\ \ ((t,s),\g)\mapsto (t\bullet \g, s\bullet\g)
\end{equation}
acts also on each leaf of $\F+\F$, and hence, it gives us a foliation $\F^*$ in the quotient $(\T\times \T)/\BG$, which by abuse of notation we call it again 
the self join of $\F$, being clear in the context
which we mean $\F^*$ or $\F+\F$.   
Our main example for this is the foliation $\F$ in $(t_1,t_2,t_3)\in \C^3$ given by 
three vector fields
\begin{equation}
\label{ramanve}
f=-(t_1^2-\frac{1}{12}t_2)\frac{\partial}{\partial t_1}-(4t_1t_2-6t_3)\frac{\partial}{\partial t_2}-
(6t_1t_3-\frac{1}{3}t_2^2)\frac{\partial}{\partial t_3},
\end{equation}
$$
h=-6t_3\frac{\partial }{\partial t_3}-4t_2\frac{\partial }{\partial t_2}-2t_1\frac{\partial}{\partial t_1},\ e=\frac{\partial}{\partial t_1},
$$
which has just one leaf which is $\C^3\backslash\{ 27t_3^2-t_2^3=0\}$ and its complement is the singular set of $\F$.
The $\C$-vector space generated by these vector fields equipped with the classical bracket of vector fields  
is isomorphic to the Lie Algebra \(\mathfrak{sl}_2\):
\begin{equation}
\label{liealgebra}
[h,e] = 2e, \quad [h,f] = -2f, \quad [e,f] = h,
\end{equation}
see for instance \cite{Gu07} \S 3. The vector field $\Ra=-f$  is mainly attributed to Ramanujan and it is the main ingredient of the geometric theory
of quasi-modular forms, see \cite{ho06-2}  and \S\ref{10mayo2018fagulha}. Its dynamics and arithmetic properties are fairly described in \cite{ho06-3}.    
 In order to describe the self join of $\F$, we consider two copies of $\F$ in 
$(t_1,t_2,t_3)\in\C^3$  and $(s_1,s_2,s_3)\in\C^3$  with the corresponding vector fields 
$e_1,f_1,h_1$ and $e_2,f_2,h_2$, respectively.  The self join $\F+\F$ is the  foliation in $(t,s)\in \C^6$ given by $e_1+e_2, f_1+f_2,h_1+h_2$.
For this example we consider the algebraic group 
\begin{equation}
 \label{2nov10}
\BG:=
\left\{\mat{k}{k'}{0}{k^{-1}}\Bigg| \ k'\in \C, k\in \C-\{0\}\right\}
\end{equation} 
and its action on $\C^3$ is given by
$$
t\bullet \g:=(
 t_1k^{-2}+k'k^{-1},
t_2k^{-4}, t_3k^{-6}), 
$$
$$t=(t_1,t_2,t_3)\in\T,\ \ \ \ 
\g=\mat {k}{k'}{0}{k^{-1}}\in \BG.
$$
In the quotient space $\Pn w:\cong\C^3\times \C^3/\BG$ with $w=(2,3,2,3,1)$ we have the affine coordinate system 
\begin{equation}
\label{17may2018}
(x_2,x_3, y_2,y_3):= \left(\frac{t_2}{(t_1-s_1)^2}, \ \ \frac{t_3}{(t_1-s_1)^3},\ \ \frac{s_2}{(s_1-t_1)^2}, \ \ \frac{s_3}{(s_1-t_1)^3}\right).
\end{equation}
We make derivations of $x_i$ and $y_i$ variables along the vector fields and divide them over $t_1-s_1$
and conclude that the foliation $\F^*$ in the affine chart $(x_2,x_3,y_2,y_3)\in\C^4$ 
is given by the quadratic vector field  $\v$ in \eqref{14March2018}, and so, $\F^*=\F(\v)$. 
Note that in the case of $y_2$ and $y_3$ we factor out the term $s_1-t_1$
and so the corresponding terms in $\v$ have negative sign.

\section{Generalized period domain}
The generalized period domain  in the case of elliptic curves
\begin{equation}
\label{perdomain}
\pedo:= \left \{
x=\mat {x_1}{x_2}{x_3}{x_4} \Bigg| x_i\in\C,\ x_1x_4-x_2x_3=1,\  \Im(x_1\overline{x_3})>0 \right \}
\end{equation}
was first introduced in \cite{ho06-3, ho06-2}, see also \cite{ho18} for this notion in connection with  arbitrary projective varieties and 
Griffiths period domain. 
The discrete group  $\SL2\Z$ (resp. $\BG$ in (\ref{2nov10}) ) acts from the left (resp. right) on 
$\pedo$ by usual multiplication of matrices. We also call $\ped:=\SL 2\Z\backslash \pedo$ the generalized period domain. 
We consider the following family of elliptic curves
\begin{equation}
\label{khodaya-1}
E_t: y^2-4(x-t_1)^3+t_2(x-t_1)+t_3=0,\  t\in\T:= \C^3\backslash \{27t_3^2-t_2^3=0\}.
\end{equation}
with $\left[\frac{dx}{y}\right], \left[\frac{xdx}{y}\right]\in H^1_\dR(E_t)$. This is the universal
family of triples $(E,\alpha,\omega)$, where $\alpha,\omega\in H^1_\dR(E)$, $\alpha$ is holomorphic in $E$ and $\Tr(\alpha\cup \omega)=1$, for details see 
\cite{ho14} \S 5.5. 
It turns out that the
period map
$$
\per: \T\rightarrow \ped,\ t\mapsto
\left [\frac{1}{\sqrt{2\pi i}}\mat
{\int_{\delta} \frac{dx}{y}}
{\int_{\delta}\frac{xdx}{y} }
{\int_{\gamma} \frac{dx}{y}}
{\int_{\gamma}\frac{xdx}{y} } \right ].
$$
is a biholomorphism of complex manifolds and it respects the right action of $\BG$ on both sides, see \cite{ho06-3} Proposition 2.
Here, $[\cdot]$ means the equivalence class and $\{\delta,\gamma\}$ is a basis of the $\Z$-module 
$H_1(E_t,\Z)$ with $\langle \delta,\gamma\rangle=-1$. From now one we identify $\T$ with $\ped$ under this map and use
the same notations in both sides. The push-forward of the vector fields $f,e,h$ by the period map are respectively given by 
\begin{align}
\label{nossoflamengo2018}
 f  &= x_2\frac{\partial}{\partial x_1}+ x_4\frac{\partial}{\partial x_3},\\ 
 e  &=   x_1\frac{\partial}{\partial x_2}+ x_3\frac{\partial}{\partial x_4},\\
 h  &=  x_1\frac{\partial}{\partial x_1}- x_2\frac{\partial}{\partial x_2}+x_3\frac{\partial}{\partial x_3}-x_4\frac{\partial}{\partial x_4}.
\end{align} 
This follows from the computation of the Gauss-Manin connection of three parameter family of elliptic curves $E_t$,
see for instance \cite{sas74} or \cite{ho06-3} Proposition 1.   
It turns out that in order to study the self joins $\F+\F$ and $\F^*$ in the algebraic side $\T\times\T$, it is enough to study it in the holomorphic 
side $\ped\times \ped$, for which
we use the coordinate system $(x,y)\in \pedo\times\pedo$.
It can be easily checked that the local first integral 
of the foliation $\F+\F$ constructed in \S\ref{11may2018} is the function
\begin{eqnarray}
\label{div2018}
 & &F:   \ped\times\ped \to \Mat(2\times 2,\C),\\
 & & \ F(x,y):=   y\cdot x^{-1}. 
 \nonumber
\end{eqnarray} 
Note that $F$ is a multi-valued function. 
We have an action of $\SL 2\Z$ on $\pedo$. This means that analytic continuations of $F$ will result in the multiplications of $F$ both
from the left and right with elements of $\SL 2\Z$.
\begin{prop}
 \label{fazendinha2018}
 The space of leaves of $\F+\F$ is 
 \begin{equation}
 \label{deligne2018}
\SL 2\Z\mathlarger{\mathlarger{\backslash}} \left(  \SL 2\C- \left\{A\in \SL 2\C \Big|   \Re(A)=0\right \}\right)\mathlarger{\mathlarger{/}}\SL 2\Z. 
 \end{equation}
\end{prop}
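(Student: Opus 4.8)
The plan is to carry everything to the period domain, where the foliation linearises. By the cited biholomorphism $\per\colon\T\to\ped=\SL 2\Z\backslash\pedo$ (compatible with the right $\BG$-action) the fields $f,e,h$ become the explicit ones in \eqref{nossoflamengo2018}, which are exactly the infinitesimal generators of right multiplication of $\SL 2\C$ on $\pedo\subset\SL 2\C$. Hence the three fields $e_1+e_2,\ f_1+f_2,\ h_1+h_2$ spanning $\F+\F$ generate the diagonal right action $(x,y)\mapsto(xg,yg)$, $g\in\SL 2\C$, on $\pedo\times\pedo$, and the leaves of $\F+\F$ lifted to $\pedo\times\pedo$ are precisely its orbits. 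I would first check that the multivalued $F(x,y)=y\,x^{-1}$ of \eqref{div2018} is a complete invariant of these orbits: clearly $F(xg,yg)=F(x,y)$, and conversely $F(x,y)=F(x',y')$ forces $(x',y')=(xg,yg)$ with $g=x^{-1}x'$. Thus upstairs the leaf space is identified with the image of $F$ inside $\SL 2\C$.

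Next I would descend. Since $\ped=\SL 2\Z\backslash\pedo$, a point of $\ped\times\ped$ lifts to $(x,y)$ only up to $(\gamma_1 x,\gamma_2 y)$ with $\gamma_i\in\SL 2\Z$, under which $F=y\,x^{-1}\mapsto\gamma_2\,F\,\gamma_1^{-1}$; this is exactly the left/right monodromy of $F$ recorded after \eqref{div2018}. Two orbits project to the same leaf in $\ped\times\ped$ iff their values of $F$ differ by $A'=\gamma_2 A\gamma_1^{-1}$, so the space of leaves is the double-coset space $\SL 2\Z\backslash\,\mathrm{Im}(F)\,/\SL 2\Z$. It remains to compute $\mathrm{Im}(F)$ and to show its complement is the locus $\{\Re(A)=0\}$, which yields \eqref{deligne2018}. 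For the image I would reduce to first columns: writing $\tau_x=x_1/x_3$, the single inequality $\Im(x_1\overline{x_3})>0$ defining $\pedo$ says exactly $\tau_x\in\uhp$, and since the first column of $y=Ax$ is $A$ times that of $x$ one has $\tau_y=A\cdot\tau_x$ in the M\"obius sense. As the second column of $x$ is free subject to $\det x=1$, a matrix $A$ is a value of $F$ iff there is some $\tau\in\uhp$ with $A\cdot\tau\in\uhp$.

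The clean half of the image computation is the identification of the excluded locus. If $\Re(A)=0$ then $A=iB$ with $B$ real and $\det B=-1$; the scalar $i$ is invisible to the M\"obius action, so $A$ acts as the real orientation-reversing map $B$, carrying $\uhp$ onto the opposite half-plane $\uhp^{-}$, and no $\tau$ works. Conversely, if $A$ carries $\uhp$ onto $\uhp^{-}$ it preserves the circle $\partial\uhp=\R\cup\{\infty\}$ with reversed orientation, whence $A=\lambda B$ with $B$ real, $\det B=-1$ and $\lambda^2=-1$, i.e.\ $\Re(A)=0$; and this locus is a union of double cosets since $\gamma_2(iB)\gamma_1^{-1}=i(\gamma_2 B\gamma_1^{-1})$ is again purely imaginary. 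The main obstacle is the reverse inclusion, namely the surjectivity of $F$ onto all of $\SL 2\C$ with $\{\Re(A)=0\}$ removed: one must show that \emph{every} $A$ with $\Re(A)\neq 0$ is actually attained, equivalently that the round image $A(\uhp)\subset\Pn 1_\C$ meets $\uhp$ whenever $A$ is not of the orientation-reversing real form. This is the delicate point of the whole statement—the configuration of the two disks $A(\uhp)$ and $\uhp$ on $\Pn 1_\C$ has to be analysed so that precisely the locus $\{\Re(A)=0\}$, and nothing more, is excluded (the acknowledgements record exactly a corrected formulation here), after which \eqref{deligne2018} follows from the double-coset description above.
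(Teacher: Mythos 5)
Your setup is correct and, up to the point where you stop, it is essentially the paper's own argument in cleaner form: the paper likewise defines the space of leaves to be $\SL 2\Z\backslash \Im(F)/\SL 2\Z$ and then determines $\Im(F)$, writing $x=\mat{\tau_1}{-1}{1}{0}\g_1$, $y=\mat{\tau_2}{-1}{1}{0}\g_2$ with $\g_i\in\BG$, $\tau_i\in\uhp$, which is equivalent to your criterion that $A\in\Im(F)$ if and only if $A(\uhp)\cap\uhp\neq\emptyset$ for the M\"obius action. The gap is exactly the step you defer: that every $A$ with $\Re(A)\neq 0$ is attained. This step cannot be repaired, because it is false. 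Consider
\begin{equation*}
A=\mat{i}{-1}{0}{-i}\in \SL 2\C,\qquad \Re(A)=\mat{0}{-1}{0}{0}\neq 0,\qquad A\tau=\frac{i\tau-1}{-i}=-\tau-i .
\end{equation*}
Then $A(\uhp)=\{w\in\C \mid \Im(w)<-1\}$ is disjoint from $\uhp$, so $A\notin\Im(F)$ by your criterion; and since elements of $\SL 2\Z$ preserve both $\uhp$ and the lower half-plane, $(\gamma_2A\gamma_1)(\uhp)=\gamma_2(A(\uhp))$ is again disjoint from $\uhp$ for all $\gamma_1,\gamma_2\in\SL 2\Z$, so the entire double coset of $A$ avoids $\Im(F)$. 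Hence the class of $A$ is a point of \eqref{deligne2018} through which no leaf passes. The geometric reason is the one you isolate at the end: disjointness of the two disks only forces $A(\uhp)$ to lie in the \emph{closed} lower half-plane, whereas $\{\Re(A)=0\}$ is exactly the locus where $A(\uhp)$ \emph{equals} the lower half-plane (that half of your argument is correct); the matrices sending $\uhp$ strictly inside the lower half-plane form a set with non-empty interior in $\SL 2\C$, and all of their classes are wrongly retained in \eqref{deligne2018}.

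You should also know that the paper's own proof asserts the missing step outright and fails at the same place. It claims that for $b=0$ the triple $(a^{-1},-a,-a^{-1}\tau_1+a\tau_2)$ ``only avoids pure imaginary vectors''; but for $a=-i$ the last entry is $-i(\tau_1+\tau_2)$, which, as $\tau_1,\tau_2$ run over $\uhp$, fills out only the half-plane $\{\Re>0\}$, so the avoided set is the closed half-plane $\{\Re\leq 0\}$ and not merely the imaginary axis --- the matrix $A$ above is precisely the case $a=-i$, $b=0$, missed entry $-1$; the case $b\neq 0$ fails similarly. So what your proposal actually establishes is the correct inclusion (purely imaginary matrices are never values of $F$, so the leaf space injects into \eqref{deligne2018}) together with the correct description $\Im(F)=\{A\in\SL 2\C : A(\uhp)\cap\uhp\neq\emptyset\}$; but the surjectivity needed to finish is not just delicate, it is false, and Proposition \ref{fazendinha2018} as stated --- like the paper's proof of it --- overstates the image. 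The statement must be corrected to the double quotient $\SL 2\Z\backslash \{A\in\SL 2\C : A(\uhp)\cap\uhp\neq\emptyset\}/\SL 2\Z$ before any proof along your (or the paper's) lines can close.
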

\begin{proof}
The space of leaves of the foliation $\F+\F$ is defined to be 
$\SL 2\Z\backslash \Im(F)/\SL 2\Z$ and so we have to determine $\Im(F)$. We have 
$$
\BG\times \uhp\cong\pedo,\ \ (\g,\tau)\mapsto \mat{\tau}{-1}{1}{0}\cdot\g,
$$
where $\uhp$ is the upper half plane. 
We use this for $x$ and $y$ and we have  
\begin{eqnarray*}
  y\cdot x^{-1} 
  &=& \mat{a^{-1}-b\tau_2}{ -a^{-1}\tau_1+a\tau_2+b\tau_1\tau_2}{-b}{a+b\tau_1}
\end{eqnarray*}
where $(a,b,\tau_1,\tau_2)$ varies in the set $(\C-\{0\})\times \C\times\uhp \times\uhp$. For $b=0$, $(a^{-1},-a, -a^{-1}\tau_1+a\tau_2)$ only avoids
pure imaginary vectors. For $b\not=0$, the vector 
$(b, a^{-1}-b\tau_2, -a-b\tau_1)$ also avoids only pure imaginary vectors. 
Note that in this case we 
can neglect the $(2,2)$-entry of this matrix as its determinant is $1$.
\end{proof}
\begin{rem}\rm
 If we replace the period maps $x:=\per(t)$ and $y:=\per(s)$ in the definition of $F$, we conclude that there are  three quadratic combination
 of elliptic integrals which are constant along the leaves of the foliation given by the vector field $\v$. 
 This is the main reason for the naming  ``elliptic modular foliation" used in this article and \cite{ho06-3}. 
 Modular mainly refers to either that such foliations live in moduli spaces or
 they have solutions in term of modular forms. 
\end{rem}

\section{Real first integral and Levi flat}
We have the following  global real first integral  for $\F+\F$:
\begin{eqnarray*}
 & & B: \ped\times\ped\to \R,\\
 & & B(x,y):=\frac{1}{2}\Tr(F\bar F^{-1})=
 \frac{1}{2} \Tr\left( y\cdot x^{-1}\cdot \bar x\cdot \bar y^{-1}  \right).
\end{eqnarray*}
Note that by taking $F\bar F^{-1}$, the right action of $\SL 2\Z$ on $F$ is killed, and by taking the trace the left action 
(recall that $\Tr(AB)=\Tr(BA)$ for two matrices $A$ and $B$). We have also the pencil of real surfaces $\Re(F(x))+a\Im(F(x))=0,\ \ a\in\R\cup \{\infty\}$ which are invariant under both left
and right action of $\SL  2\Z$. Since $\det(F)=1$, only for  $a=0,\infty$ we might get non-empty set. By Proposition \ref{fazendinha2018}
this set for $a=0$ is also empty. Therefore, we define
\begin{eqnarray}
\label{desejos2018}
 M_{\R} & :=& \left \{ p\in \ped\times\ped \Big| \Im(F(p))=0\right \}
\end{eqnarray}
which is called a Levi-flat, see \cite{Alcides2011} for the general definition.
Note that  this set is inside the fiber $B^{-1}(1)$ of $B$ over $1$. 
We also define
\begin{eqnarray*}
M_\Q &:=& \left\{ p\in\ped\times\ped\Big | F(p)\in \sqrt{\Q^+ } \GL^+ (2,\Q)\right\}\\
     & = &  \mathlarger{\cup}_{d=1}^\infty V_d,\ \ \ \
\end{eqnarray*}
where 
\begin{equation}
\label{zoorekhar2018}
  V_d:= \left\{ p\in\ped\times\ped\Big | F(p)=A_d \right\},\ \ A_d:= \mat{ d^{\frac{1}{2}}}{0}{0}{ d^{-\frac{1}{2}}}.  
\end{equation}
The second equality is modulo the actions of $\SL 2\Z$ from the left and right on $F$. Note that 
$\SL 2\Z\backslash \Mat_d(2,\Z)/\SL 2\Z$ is naturally isomorphic to 
the set (up to isomorphism) of finite abelian groups of order $d$ and generated by at most two elements, and so, each element in this
quotient has a unique representative of the form $\mat{ d_1d_2}{0}{0}{d_2}$. 
We have the following immersion of the three dimensional non-compact version of modular curves inside $\ped\times\ped$:
\begin{equation}
\Gamma_0(d)\backslash\pedo \hookrightarrow \ped\times\ped ,\ x\mapsto \left( x ,  A_d\cdot x  \right)   
\end{equation}
whose image is exactly $V_d$ defined above and it is a connected fiber of $F$, and hence a leaf of $\F+\F$.  
\begin{prop}
\label{twoyearsago}
The only closed leaves  of $\F+\F$ in $\ped\times\ped$ are $V_d$'s for $d\in\N$.  
\end{prop}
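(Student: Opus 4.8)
The plan is to reduce closedness of a leaf to closedness of a double coset in $\SL 2\C$. Write $\Gamma:=\SL 2\Z$. By Proposition \ref{fazendinha2018} the leaves of $\F+\F$ are indexed by the double cosets $\Gamma A\Gamma$ with $A\in\SL 2\C$, $\Re(A)\not=0$, the leaf attached to $A$ being
$$
L_A:=\left\{([x],[y])\in\ped\times\ped\ \Big|\ yx^{-1}\in\Gamma A\Gamma\right\},
$$
so that $V_d=L_{A_d}$. The map $\pedo\times\pedo\to\SL 2\C$, $(x,y)\mapsto yx^{-1}$, is an open surjective submersion onto $\SL 2\C-\{\Re=0\}$ which descends to $F$, and the covering $\pedo\times\pedo\to\ped\times\ped$ is an open map; chasing preimages through both maps (preimages of saturated sets under open continuous surjections being closed iff the image is closed) shows that $L_A$ is closed in $\ped\times\ped$ if and only if $\Gamma A\Gamma$ is closed in $\SL 2\C-\{\Re=0\}$. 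Thus the whole statement reduces to: $\Gamma A\Gamma$ is closed in $\SL 2\C-\{\Re=0\}$ exactly when $A\in\sqrt{\Q^+}\,\GL^+(2,\Q)$, in which case $\Gamma A\Gamma=\Gamma A_d\Gamma$ for a unique $d\in\N$.

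The easy inclusion is that every $V_d$ is closed. If $A=\frac1{\sqrt d}M$ with $M\in\Mat(2,\Z)$ and $\det M=d$, then $\Gamma A\Gamma\subset\frac1{\sqrt d}\Mat(2,\Z)$, a discrete and hence closed subset of $\SL 2\C$; so $V_d=L_{A_d}$ is a closed leaf.

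For the converse I would first establish that closedness forces $\Gamma_A:=\Gamma\cap A^{-1}\Gamma A=\{\gamma\in\Gamma\mid A\gamma A^{-1}\in\Gamma\}$ to have finite index in $\Gamma$ (this is deferred to the last paragraph), and then extract the arithmetic conclusion purely algebraically. Finite index forces $A$ real: a finite index subgroup of $\SL 2\Z$ is Zariski dense in $\SL 2\C$, and since $A\gamma A^{-1}\in\Gamma\subset\SL 2\R$ is real for every $\gamma\in\Gamma_A$, taking complex conjugates gives $A^{-1}\bar A\,\gamma=\gamma\,A^{-1}\bar A$ for all such $\gamma$, whence $A^{-1}\bar A$ centralizes a Zariski dense set and lies in $\{\pm I\}$. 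The case $\bar A=-A$ would give $\Re(A)=0$, which is excluded, so $\bar A=A$ and $A\in\SL 2\R$. For real $A$, finite index of $\Gamma_A$ says precisely that $A$ lies in the commensurator of $\SL 2\Z$ in $\SL 2\R$, classically $\GL^+(2,\Q)$ modulo scalars; normalising the determinant yields $A\in\sqrt{\Q^+}\,\GL^+(2,\Q)$. Finally, Smith normal form reduces $\Gamma A\Gamma$ to a diagonal representative $\frac1{\sqrt n}\,\mathrm{diag}(d_1,d_2)$ with $d_1\mid d_2$ and $n=d_1d_2$, which after conjugation by $\mat{0}{-1}{1}{0}\in\Gamma$ equals $A_{e}$ with $e=d_2/d_1\in\N$; hence $L_A=V_e$, and computing $\Gamma_{A_d}=\Gamma_0(d)$ recovers the modular description $V_d\cong\Gamma_0(d)\backslash\pedo$.

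The main obstacle is the implication ``closed $\Rightarrow$ $\Gamma_A$ of finite index'', equivalently its contrapositive that a non-commensurating double coset is never closed. This is a homogeneous-dynamics statement: when $\Gamma_A$ has infinite index, the fibre of the first projection $L_A\to\ped$ over $[x]$ is the infinite Hecke-type set $\Gamma\backslash A\Gamma x$, and one must show that $\Gamma A\Gamma$ accumulates at a point of $\SL 2\C-\{\Re=0\}$ outside itself. I would deduce this from the classical dichotomy for $\Gamma=\SL 2\Z$: for $g\in\SL 2\R$ the double coset $\Gamma g\Gamma$ is discrete precisely when $g$ lies in $\GL^+(2,\Q)$ up to a scalar, and is dense in $\SL 2\R$ otherwise (as follows from minimality of the horocycle flow, or from Ratner's theorem). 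Since the Zariski-density argument already places every non-real $A$ in the infinite-index case, the non-real cosets are handled by the same principle once one checks that their closures still meet $\SL 2\C-\{\Re=0\}$. Carrying out this density argument cleanly, rather than the purely algebraic commensurator computation, is where the real work lies.
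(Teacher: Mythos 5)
Your proposal takes essentially the same route as the paper's own proof, which consists of exactly two unproved assertions: that the leaf $F^{-1}(A)$ is closed if and only if $\#\,\SL 2\Z\backslash\left(\SL 2\Z\cdot A\cdot\SL 2\Z\right)<\infty$, and that this finiteness holds if and only if $A$ is, up to a scalar multiple, a rational matrix. In fact you supply strictly more justification than the paper writes down: the topological reduction (the leaf is closed in $\ped\times\ped$ iff $\Gamma A\Gamma$ is closed in $\SL 2\C-\{\Re=0\}$), the easy direction via discreteness of $\frac{1}{\sqrt{d}}\Mat(2,\Z)$, and the chain ``finite index of $\Gamma_A$ $\Rightarrow$ $A$ real $\Rightarrow$ $A$ in the commensurator $\Rightarrow$ $A\in\sqrt{\Q^+}\,\GL^+(2,\Q)$'' together with the Smith normal form reduction to $A_d$ are all correct and none of them appears explicitly in the paper. (One minor repair: your finite-index hypothesis is one-sided while the commensurator condition is two-sided; for real $A$ this is automatic, since $A(\Gamma\cap A^{-1}\Gamma A)A^{-1}$ is then a lattice of $\SL 2\R$ contained in the lattice $\Gamma$, hence of finite index by comparing covolumes.) The step you defer --- that closedness forces finite index, i.e.\ that a non-commensurating double coset $\Gamma A\Gamma$, real or not, accumulates at a point of $\SL 2\C-\{\Re=0\}$ outside itself --- is precisely the unproved ``only if'' of \eqref{28.03.2018}; so measured against the paper you are missing nothing, but be aware that the paper offers no argument to close this gap, and your concern about non-real $A$ (whose coset could a priori accumulate only on $\{\Re(A)=0\}$ and so remain closed in the open set) is a genuine subtlety it never addresses. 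One caution for that pending step: since $g\mapsto\frac{1}{2}\Tr(g\bar{g}^{-1})$ is invariant under the two-sided action of $\Gamma$, the closure of any double coset lies in a level set of this function, so for non-real $A$ the horocycle/Ratner argument cannot yield density in $\SL 2\R$ or $\SL 2\C$ and must instead be run inside the corresponding Levi-flat level hypersurface. In short: same skeleton as the paper, your version is the more complete of the two, and the step you flag as ``the real work'' is exactly the step the paper's proof assumes.
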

\begin{proof}
For $A$ in the double quotient \eqref{deligne2018} the leaf $F^{-1}(A)$ is closed in $\ped\times \ped$ if and only if
the double quotient 
\begin{equation}
\label{28.03.2018}
\# \SL 2\Z \backslash \left ( \SL 2\Z\cdot A\cdot   \SL 2\Z\right) <\infty
\end{equation}
is finite. This happens if and only if the matrix $A$,  up to multiplication with a constant,   has rational entries.
Since $\det(A)=1$, we conclude that such a leaf is in $M_\Q$.
\end{proof}
\begin{rem}\rm
 \label{edemgurilandiaflamengo2018}
In the algebraic side $\T\times\T$, it is easy to verify that the affine variety $V_d\subset \T\times\T$ is the locus of isogenies
\begin{equation}
\label{CMPDavarNabood2017}
f: E_t\to E_s,\ \ f^*\frac{dx}{y}=d^{\frac{1}{2}}\cdot \frac{dx}{y},\ \  f^*\frac{xdx}{y}=d^{-\frac{1}{2}}\cdot \frac{xdx}{y}
\end{equation}
Here, $f^*: H^1_\dR(E_s)\to H^1_\dR(E_t)$ is the map induced in 
de Rham cohomologies.  
\end{rem}

\section{Eisenstein series and Halphen property}
\label{10mayo2018fagulha}
Recall the Eisenstein series:
\begin{equation}
\label{eisenstein}
E_{2k}(\tau)=1+(-1)^k\frac{4k}{B_k}\mathlarger{\sum}_{n\geq
1}\sigma_{2k-1}(n)q^{n},\ \  k=1,2,3, \ \tau\in\uhp,
\end{equation}
where $q=e^{2\pi i \tau}$ and $\sigma_i(n):=\sum_{d\mid n}d^i$ and $B_i$'s are the Bernoulli numbers, 
$B_1=\frac{1}{6},\ B_2=\frac{1}{30},\ B_3=\frac{1}{42},\ \ldots$.
S. Ramanujan in \cite{ra16} page 181 proved that 
\begin{equation}
\label{emiliomestrado2018}
g=(g_1,g_2,g_3)=(a_1E_2,a_2E_4,a_3E_6),
 \end{equation}
with $(a_1,a_2,a_3)=(\frac{2\pi i}{12},12(\frac{2\pi i}{12})^2 , 8(\frac{2\pi i}{12})^3)$ is a solution of the vector
field $\Ra:=-f$ in \eqref{ramanve} with derivation with respect to $\tau$, and so $f$ is mainly known is Ramanujan relation (or differential equation) 
between Eisenstein series. 
Thirty years before Ramanujan,  G. Halphen calculated the Ramanujan differential equation and apparently he did not know about Eisenstein series (see \cite{hal00} page 331). 
He had even a generalization of this differential equation into a three parameter family that we will discuss it in \S\ref{12may2018}. A fundamental but simple observation
due to Halphen is the following.  For a holomorphic function defined in $\uhp$ and  $A=\mat{a}{b}{c}{d}\in \SL 2\C$,   $m\in\N$ let
\begin{align}\label{marizek2018}
(f\mid_m^0A)(\tau) &:= (c\tau+d)^{-m}f(A\tau),\\ 
 (f\mid_m^1A)(\tau) &:=  (c\tau+d)^{-m}f(A\tau)-c(c\tau+d)^{-1},\nonumber
\end{align}
If $\phi_i,\ i=1,2,3$  are the coordinates of 
a solution of the Ramanujan differential equation  $\Ra$ then 
$\phi_1\mid_2^1A,\ \phi_2\mid_4^0A,\ \phi\mid_6^0A$
are also coordinates of a solution of $\Ra$  for all
$A\in\SL 2\C$. This is known as the  Halphen property. The subgroup
of $\SL 2\C$ which fixes the solution given by Eisenstein series
is $\SL 2\Z$.

\begin{rem}\rm
By Halphen property  and the construction of the vector field $\v$ in \S\ref{11may2018} it follows that 
a general solution of  this vector field in $\Pn w-\{\Delta=0\}$, up to change of coordinate system in $\tau$,   is given by
\begin{eqnarray}
\label{4apr2018-2}
& &\uhp\to \Pn w, \\  
& &\tau\mapsto \left[g_2(\tau ):g_3(\tau): (g_2|^0_4A)(\tau): -(g_3|^0_6A)(\tau): g_1(\tau)-(g_1|^1_2A)(\tau)\right ].
\end{eqnarray}
Note that in the final step of the construction of the vector field  $\v$ we 
have the  division over  $t_1-s_1$. This implies that the map \eqref{4apr2018} is not necessarily a solution of the vector field and it is only tangent 
to it.
\end{rem}

\section{Proof of Theorem \ref{jorgeduque2018}}
For the proof of Theorem \ref{jorgeduque2018} we note that $\F(\v)=\F^*:=(\F+\F)/\BG$, where $\F$ is the one leaf 
foliation in $\ped$ given by the vector fields \eqref{nossoflamengo2018} 
and we have the diagonal action of $\BG$ on $\ped\times\ped$ as in \eqref{diagaction}. 
For simplicity, we have used the same notation for objects in $\T\times \T$, $\ped\times\ped$
 or their quotients by $\BG$.  
In this and the next section we use $\Gamma:=\SL 2\Z$. 

{\it Proof of \ref{111}:} 
Let us consider the holomorphic map
\def\mm{{\mathfrak s}}
\begin{equation}
\label{4apr2018}
\mm: \ X_0(d)\to \Pn w,\ \ \ d\in\N \  
\end{equation}
which in the affine chart $\C^4\subset \Pn w$ is given by
{\small
\begin{equation}
\label{despacitopiano2018}
\ \tau\mapsto \left(\frac{g_2(\tau )}{(g_1(\tau)-d\cdot g_1(d\cdot\tau ))^2}, \ \ 
\frac{g_3(\tau)}{(g_1(\tau)-d\cdot g_1(d\cdot\tau))^3},\ \ 
\frac{d^2\cdot g_2(d\cdot \tau)}{(d\cdot g_1(d\cdot\tau)-g_1(\tau))^2}, \ \ 
\frac{d^3\cdot g_3(d\cdot\tau )}{(d\cdot g_1(d\cdot \tau)-g_1(\tau))^3}\right)
\end{equation}
}
where $g_i$'s are the Eisenstein series given in \eqref{emiliomestrado2018}. The fact that $(g_1,g_2,g_3)$ is a solution of the Ramanujan differential equation implies the same
statement for $(d\cdot g_1(d\cdot\tau), d^2g_2(d\cdot \tau ), d^3g_2(d\cdot \tau ))$. By our construction of the 
vector field \eqref{14March2018} in \S\ref{11may2018}, we conclude that the image
of the map \eqref{4apr2018} is tangent to the vector field $\v$ in \eqref{14March2018}.
The map \eqref{4apr2018-2}  for $A=A_d$ gives us the map $\mm$ after taking the quotient by $\Gamma_0(d)$, 
where $A_d$ is defined in \eqref{zoorekhar2018}. 
 The curve $S_0(d)$ is just the quotient of the three dimensional variety $V_d$ by $\BG$. 
The fact that these are the only algebraic solutions follows from Proposition \ref{twoyearsago}.
Chow theorem implies that the image of the map \eqref{4apr2018}  is an algebraic curve. 
However, this theorem does not give any information about the field of definition. 
In \S\ref{cobertura2018} we give the description of $S_0(d)$ as a curve over  $\Q$.

For $A_d$ defined in \eqref{zoorekhar2018}, the equivalent classes in the quotient 
$\Gamma\backslash\Gamma A_d\Gamma$
give the same map  \eqref{4apr2018}. Each class is represented uniquely by an element of the from
\begin{equation}
\label{fazendamat2018}
 d^{-\frac{1}{2}}\mat{a}{e}{0}{b},\ \ d=ab,\ a,b,e\in\N,\ \ 0\leq e\leq b-1, 
\end{equation}
but not all these matrices are in $\Gamma A_d\Gamma$.  The cardinality of $\Gamma\backslash\Gamma A_d\Gamma$ is $\psi(d)$
wheras  the cardinality of matrices in \eqref{fazendamat2018} is $\sigma_1(d)$. 
The map \eqref{4apr2018-2} with $A$ as in \eqref{fazendamat2018} is
\begin{equation}
\label{soumacorrida2018}
\tau\mapsto \left[g_2(\tau ):g_3(\tau): a^2b^{-2}g_2\left ( \frac{a\tau+e}{b}\right): -a^3b^{-3}g_3\left( \frac{a\tau+e}{b}\right) : 
g_1(\tau)-ab^{-1}g_1\left( \frac{a\tau+e}{b}\right)  \right ]
\end{equation}
This evaluated at $\tau=i\infty$ (equivalently $q=0$) is
$$
\left[12 :8: 12 (ab^{-1})^2: -8(ab^{-1})^3 : 1-(ab^{-1})\right ]= g([2:-2a^{-1}b])=g([a:-b]).
$$
\begin{rem}\rm
It seems  that for any decomposition $d=ab,\ a,b\in\N$ we have  $\varphi(\gcd(a,b))$ smooth branches of 
$S_0(d)$ crossing the point $g([a:-b])$ in  the curve singularity of $\F(\v)$, where  $\varphi$ is the Euler's totient function.
This is compatible with the well-known fact that the cardinality of the set of cusps $\Gamma_0(d)\backslash\left(\Q\cup \{\infty\}\right)$ 
is   $\sum_{a\mid d}  \varphi( \gcd(a, \frac{d}{a}))$. 
\end{rem}   

{\it Proof of \ref{222}:}
The set $M_\R$ announced in the theorem is just the quotient of $M_\R$ in \eqref{desejos2018} by the diagonal action of $\BG$ and the 
first integral $B$ is just the first integral in \eqref{div2018} after taking the quotient. 


\begin{rem}\rm
The eigenvalues of the linear part of the isolated singularity $0\in\C^4$ of $\F$ are $2,3,-2,-3$ which is resonant and hence 
it does not lie in the Poincar\'e domain. Therefore, we may not be able to do  
a holomorphic change of coordinates in $(\C^4,0)$ such that $(\F,0)$ is equivalent
to its linear part which has the first integrals 
\begin{equation}
 \frac{(x_2+6x_3)^3}{x_3^2}={\rm const}_1,\ \ \frac{(y_2+6y_3)^3}{y_3^2}={\rm const}_2,
\end{equation}
see \cite{IlyashenkoYakovenko} Theorem 4.3.
\end{rem}
\begin{rem}\rm
The curve singularity of $\F$ intersects the weighted projective space at infinity $\{y_1=0\}\cong\Pn {2,3,2,3}$
at the point $g([1:-1])$. Therefore, an algebraic leaf $S_0(d)$ of $\F$ intersects the curve singularity at infinity if and only if 
$d$ is a square. 
\end{rem}

\section{Proof of Theorems \ref{16may2018}, \ref{saket2018}}
\label{cobertura2018}
Let us define
\begin{eqnarray*}
 P_f(x)&:=& \prod_{A\in \Gamma\backslash \Gamma A_d\Gamma }\left(x- f|_k^0 A\right),\ \ \ \ \ \ \ \ (f,k)=(g_2,4),(g_3,6),\\
 P_f(x)&:=& \prod_{A\in \Gamma\backslash \Gamma A_d\Gamma }\left(x-f+ f|_k^1 A\right ),\ \ (f,k)=(g_1,2), 
\end{eqnarray*}
where $\Gamma:=\SL 2\Z$ and the slash operators are defined in \eqref{marizek2018}. 
Using  Hecke operators one can prove that, $P_{g_i},\ \ i=1,2,3$ is a homogeneous 
polynomial degree $i\psi(d)$ in 
$$
\Q[x,g_2, g_3],\ \ \deg(x)=m,\ \   \deg(g_2)=2, \deg(g_3)=3, 
$$
This is classical for $i=2,3$, but less well-known for $i=1$. In this case we use the following equalities for $f=g_1$: 
\begin{eqnarray}
f|_2^1A|_2^0B &=& f|_2^1AB+c'(c'\tau+d')^{-1},\\
f|_2^0B & =&f+c'(c'\tau+d')^{-1}   \ \ \ \ \ \ \  \forall A, B=\mat{a'}{b'}{c'}{d'}\in\SL 2\C,   
\end{eqnarray}
and conclude that the coefficient $x^i$  of $P_{g_1}$ in $x$  is a  modular form of weight $2(\psi(d)-i)$  for $\Gamma$  and 
defined over $\Q$, 
and hence can be written as a polynomial of in $g_2,g_3$ with $\Q$ coefficients. For further details  
see Proposition 6 \cite{ho14-II}. Note that $\Gamma\backslash \Gamma A_d \Gamma$
is isomorphic to the fiber of the map 
$\Gamma\backslash \Mat_d(2,\Z)\to \Gamma\backslash \Mat_d(2,\Z)/\Gamma$ over 
over the matrix $\mat{d}{0}{0}{1}$, and in \cite{ho14-II} we have used the latter set in order to define $P_{g_i}$'s.  
In general, we can define $P_f$ for any quasi-modular form of weight $k$ and differential 
order $n$ for $\Gamma$. For examples of polynomials $P_{g_i}$, see \cite{ho14-II} page 440. 

The conclusion is that we have  three homogeneous polynomials  
$Q_{d,1}(y_1,x_2,x_3)$, $Q_{d,2}(y_2,x_2,x_3)$, $Q_{d,3}(y_3,x_2,x_3)$ of degrees respectively $\psi(d),2\psi(d),\ 3\psi(d)$ in 
the ring  $\Q[x_2,x_3,y_1,y_2,y_3]$,  $\deg(x_i)=\deg(y_i):=i$ such that 
\begin{align}
&Q_{d,1}(d\cdot g_1(d\cdot \tau)-g_1(\tau), g_2(\tau),g_3(\tau))=0,\\
& Q_{d,2}(d^2\cdot g_2(d\cdot \tau), g_2(\tau),g_3(\tau))=0,\\ 
&Q_{d,3}(d^3\cdot g_3(d\cdot \tau), g_2(\tau),g_3(\tau))=0
\end{align}
and hence they give three equations for $S_0(d)\subset \Pn w$. 

{\it Proof of Theorem \ref{16may2018}:} 
The polynomial $Q_{d,i}$ is a linear
combination of the monomials (\ref{8july2011}) as in \eqref{21may2018}.
We prove that $\det B_{d,i}$  restricted to $S_0(d)$ is identically zero. 
Since $\v$ is tangent to the curve $S_0(d)$, we know 
that for all $r\in \N_0$ we have   $\v^r(\sum_{j=1}^{m_{d,i}} c_j\alpha_{i,j})=\sum_{j=1}^{m_{d,i}} c_j\v^r_d(\alpha_{i,j})$ restricted to
$S_0(d)$ is zero. This in turn implies that the matrix $B_{d,i}$ restricted  to points of $S_0(d)$ 
has non-zero kernel and so its determinant restricted to $V_d$ is zero. 
The last part of our proof is a slight generalization to higher dimensions of an argument  due to J. V. Pereira, see  
\cite{Pereira2001} Proposition 1, page 1390. \qed

{\it Proof of Theorem \ref{saket2018} part 2:} 
Once we know a point $p\in S_0(d)$ then we can repeat the proof  of Theorem \ref{16may2018} and conclude that $B_{d,i}(p) C_{d,i}=0$. 

{\it Proof of Theorem \ref{saket2018} part 1:} 
Let us write our  elliptic curves $E_1:=E_{0,t_2,t_3}$ and $E_2:=E_{0,s_2,s_3}$ in the notation of the three parameter Weierstrass format \eqref{khodaya-1}.
The equality \eqref{omidpiano2018} can be written in the following format 
$$
 \begin{bmatrix}
  f^*\frac{dx}{y}, &
  f^*\frac{xdx}{y}  
 \end{bmatrix}=
\begin{bmatrix}
  \frac{dx}{y}, &
  \frac{xdx}{y}  
 \end{bmatrix}
 \g\cdot A_d,
 $$
 where $A_d$ is the matrix \eqref{zoorekhar2018} and $\g$ is in the algebraic group $\BG$ defined in \eqref{2nov10}. 
 This follows, for instance, form \cite{ho14-II} Proposition 1.
 Define $s_1:=0$ and $s_2,s_3$ as before and redefine
$$
(t_1,t_2,t_2):= (0,t_2,t_3)\bullet \g=(k'k^{-1},t_2k^{-4},t_3 k^{-6}),
$$
where we have used the action of the algebraic group $\BG$ in \eqref{2nov10}.  
The point $(t,s)\in\T\times\T$ lies in the algebraic set $V_d$ defined in  Remark 
\ref{edemgurilandiaflamengo2018}. After taking the quotient by the diagonal action of $\BG$ on $\T\times \T$ we get the point \eqref{26may2018} of $S_0(d)$.

\section{Computing at a cusp}
We can also state a theorem similar to Theorem \ref{saket2018} without the input of an 
isogeny and using a point in the curve singularity of $\F(\v)$ 
which corresponds to cusp points of $S_0(d)$. 
Note that at these points the vector field $\v$ vanishes and the matrix $B_{d,i}$
evaluated there has all its lines equal to zero except the first one. 
Therefore, we have to use second 
order approximation of modular curves at these points.
\begin{prop}
\label{23may2018}
Let $d=ab$ with $a,b\in \N,\ \ b<a,\ r:=\frac{a}{b}$ and $p:=g([r:-1])$. 
Let also $C_{d,i}$ be the $m_{d,i}\times 1$ matrix with entries $c_{i,j},\ j=1,2,\ldots,m_{d,i}$ which are coefficients of $Q_{d,i}$ in \eqref{21may2018}.
We have 
\begin{equation}
\label{omidagitado2018}
\left( \frac{\partial B_{d,i}}{\partial x_2}(p)\cdot (6-5r)(1-r) + \frac{\partial B_{d,i}}{\partial x_3}(p)\cdot (7r-6)+ 
 \frac{\partial B_{d,i}}{\partial y_2}(p)\cdot r^2(1-r) - \frac{\partial B_{d,i}}{\partial x_3}(p)\cdot r^3 \right) C_{d,i}=0
\end{equation}
\end{prop}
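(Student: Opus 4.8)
The plan is to reduce the statement to the computation of the tangent line of the branch of $S_0(d)$ at the cusp $p=g([r:-1])$. The starting point is the identity proved inside Theorem~\ref{16may2018}: since $\v$ is tangent to $S_0(d)$ and the forms $Q_{d,i}$ of \eqref{21may2018} vanish on $S_0(d)$, every iterate $\v^{k}(Q_{d,i})=\sum_j c_{i,j}\v^{k}(\alpha_{i,j})$ vanishes identically on $S_0(d)$, which is exactly the vector identity $B_{d,i}(q)\,C_{d,i}=0$ for all $q\in S_0(d)$. At $p$ the field $\v$ vanishes, so (as noted before the statement) $B_{d,i}(p)$ has only its first row nonzero and the identity degenerates to $Q_{d,i}(p)=0$. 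To extract the first nontrivial information I would differentiate this identity once along $S_0(d)$.

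Concretely, I would take the local branch through $p$ given by the parametrization \eqref{soumacorrida2018}, regarded as a holomorphic map $\tilde\gamma(q)$ into the affine chart $y_1=1$ of $\Pn w$ with $q=e^{2\pi i\tau}$ and $\tilde\gamma(0)=p$. Applying $\frac{d}{dq}\big|_{0}$ to $B_{d,i}(\tilde\gamma(q))\,C_{d,i}=0$ and using the chain rule yields
\begin{equation}
\left(\sum_{c\in\{x_2,x_3,y_2,y_3\}}\tilde\gamma_c'(0)\,\frac{\partial B_{d,i}}{\partial c}(p)\right)C_{d,i}=0,
\end{equation}
so the whole statement reduces to showing that the tangent vector $\tilde\gamma'(0)$ is proportional to the coefficient vector $\big((6-5r)(1-r),\,7r-6,\,r^2(1-r),\,-r^3\big)$ of \eqref{omidagitado2018} (the fourth term there being the $y_3$-derivative).

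The computation of $\tilde\gamma'(0)$ is the core of the argument, and I would carry it out from the $q$-expansions $E_2=1-24q+\cdots$, $E_4=1+240q+\cdots$, $E_6=1-504q+\cdots$ of \eqref{eisenstein}. Writing $w=\tfrac{a\tau+e}{b}$, one has $e^{2\pi i w}=\zeta q^{r}$ with $r=a/b$, and the four affine coordinates of $\tilde\gamma$ are $g_2(\tau)/D^2$, $g_3(\tau)/D^3$, $r^2g_2(w)/D^2$, $-r^3g_3(w)/D^3$, where $D=g_1(\tau)-r\,g_1(w)=a_1\big((1-r)-24q+\cdots\big)$. The decisive role of the hypothesis $b<a$ is that then $r>1$, so $\frac{d}{dq}q^{r}\big|_{0}=0$: all contributions of the transformed series $g_i(w)$ are of order $q^{r}$ and drop out of the first $q$-derivative, so $q$ serves as an admissible parameter even though it is not the cusp uniformizer. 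Hence the first-order variation of the two $x$-coordinates comes from both $g_i(\tau)$ and $D$, while that of the two $y$-coordinates comes only through $D$; collecting the linear terms and clearing the common factor $\tfrac{576}{(1-r)^4}$ gives exactly $\tilde\gamma'(0)\parallel\big((6-5r)(1-r),\,7r-6,\,r^2(1-r),\,-r^3\big)$, which is the required direction.

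The main obstacle is precisely this expansion: one must propagate the weighted-projective normalization through the powers $D^{-2},D^{-3}$ without error and recognize the asymmetry between the $x$- and $y$-coordinates created by $r>1$. (If instead $a<b$ one sits at the $x\leftrightarrow y$ symmetric cusp, $q$ must be replaced by $q^{r}$, and the two halves of the tangent vector are interchanged; this is why $b<a$ is imposed.) Once the tangent direction is matched, the relation \eqref{omidagitado2018} is immediate from the differentiated identity displayed above.
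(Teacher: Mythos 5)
Your proposal is correct and is essentially the paper's own proof: the paper likewise differentiates the identity $B_{d,i}(\mm(\tau))C_{d,i}=0$ along the parametrization \eqref{soumacorrida2018}, uses $b<a$ (i.e.\ $r>1$) to discard the $q^{r}$-terms coming from $g_i\left(\frac{a\tau+e}{b}\right)$, and matches the resulting tangent vector, up to the common factor $\frac{576}{(1-r)^4}$, with the coefficients in \eqref{omidagitado2018}. You even reproduce the same numerical bookkeeping and correctly note that the last term of \eqref{omidagitado2018} is the $y_3$-derivative (a typo in the statement), so there is nothing to add.
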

\begin{proof}
Since  the image $S_0(d)$ of the 
map $\mm $ in \eqref{4apr2018} is tangent to $\v$ and $Q_{d,i}$ restricted to it vanishes, 
we conlude that the pull-back   of $B_{d,i}$ by $\mm$ satisfies   $B_{d,i}(\mm(\tau))C_{d, i}=0$. The theorem follows from derivating this equality 
with respect to a variable $Q$  and then setting $Q=0$ which we explain it below. 

The map \eqref{soumacorrida2018} is just a different parametrization of $S_0(d)$ using the action of $\Gamma$ on $\uhp$.
Let us write $r=\frac{a'}{b'}$ with $\gcd(a',b')=1$. 
The $g_i$'s in this map have Fourier expansions in terms of $Q:=e^{\frac{2\pi i}{b'}}$. We have 
$g_i(\tau)=*+*Q^{b'}+\cdots$  and $g_i(r\tau +\frac{e}{b})=*+*Q^{a'}+\cdots $, where $*$'s are constants. Since $b'<a'$, we need only the constant
term of  $g_i(r\tau +\frac{e}{b})$. If we write $g_i=a_i(1+b_iq+\cdots )$ we get
$$
\mm(\tau)=g([r:-1])+
$$
$$
\left(
\frac{a_2(-b_2r+b_2-2b_1)}{a_1^2(1-r)^3}, \ \frac{a_3(-b_3r+b_3-3b_1)}{a_1^3(1-r)^4}, 
\frac{-2r^2a_2b_1}{a_1^2(1-r)^3},\ \frac{3r^3a_3b_1}{a_1^3(1-r)^4}
\right)Q+\cdots
$$
where $\cdots$ means higher order terms. Multiplying the coefficient of $Q$ with $-\frac{a_1^3(1-r)^4}{3a_3b_1}$  we get the desired constants in $r$ 
which are used in \eqref{omidagitado2018}.
Note that $(a_1,a_2,a_3)=(1,12,8)$ (up to $\frac{2\pi i}{12}$ factors which do not affect this 
computation) and  $(b_1,b_2,b_3)=(-24,240,-504)$. \qed
\end{proof}
A computer implementation of the matrix $B_{d,i}(p)$ for $p:=g([d:-1])$  and small values 
of $d$ shows that the rank of the matrix $B_{d,i}(p)$ is much below $m_{d,i}-1$, and hence, 
$C_{d,i}$ is not uniquely determined by the equality \eqref{omidagitado2018}. 
\href
{http://w3.impa.br/~hossein/WikiHossein/files/Singular%20Codes/2018-06-EllipticModularFoliationII.txt 
}
{For instance, for $d=2,3,4,5$} and $i=2$ we have $m_{d,i}=5,7,12,12$ and the rank of $B_{d,i}$ is respectively 
                                              $1,3,3,3$.


\def\Ha{{\rm H}}
\section{Self join of Halphen differential equation}
\label{12may2018}
We can carry out the self join of foliations introduced in \S\ref{11may2018} for the 
Halphen differential equation:
\begin{equation}
\label{tatlis}
\Ha:\left \{ \begin{array}{l}
\dot t_1=
(1-\alpha_1)(t_1t_2+t_1t_3-t_2t_3)+\alpha_1 t_1^2\\
\dot t_2 =
(1-\alpha_2)(t_2t_1+t_2t_3-t_1t_3)+\alpha_2 t_2^2\\
\dot t_3 =
(1-\alpha_3)(t_3t_1+t_3t_2-t_1t_2)+\alpha_3 t_3^2
\end{array} \right., 
\end{equation}
see \cite{ha81-1}, 
with $\alpha_i\in\C\cup\{\infty\}$ (if for instance $\alpha_1=\infty$ then the first row is replaced with 
$-t_1t_2-t_1t_3+t_2t_3+t_1^2$).
It turns out that if we consider two copies of $\Ha$ in $(t_1,t_2,t_3)$
and $(s_1,s_2,s_3)$ variables, but with the same parameters $\alpha_1,\alpha_2,\alpha_3$,  
and compute the derivation of the new variables 
$$
(x_1,x_2, y_1,y_2):= \left (\frac{t_2-t_1}{s_1-t_1}, \ \ \frac{t_3-t_1}{s_1-t_1},\ \ \frac{s_2-s_1}{t_1-s_1}, \ \ \frac{s_3-s_2}{t_1-s_1}\right )
$$
and divide the result over $s_1-t_1$, then we arrive at:
\begin{equation}
\label{DupHal2018}
\left\{
\begin{array}{l}
\dot x_1= \hspace{.1in} x_1\left( \alpha_2 x_1+(2-\alpha_2-\alpha_1)x_2-(1-\alpha_1)(x_1x_2-y_1y_2)-1  \right) \\
\dot x_2=  \hspace{.1in}   x_2\left( \alpha_3 x_2+(2-\alpha_3-\alpha_1)x_1-(1-\alpha_1)(x_1x_2-y_1y_2)-1  \right) \\
\dot y_1=- y_1\left( \alpha_2 y_1+(2-\alpha_2-\alpha_1)y_2-(1-\alpha_1)(y_1y_2-x_1x_2)-1  \right)  \\
\dot y_2= -  y_2\left( \alpha_3 y_2+(2-\alpha_3-\alpha_1)y_1-(1-\alpha_1)(y_1y_2-x_1x_2)-1  \right) 
\end{array}\right. 
\end{equation}
The Halphen vector field for $\alpha_1=\alpha_2=\alpha_3=0$ is a pull-back of the Ramanujan vector field with a degree $6$  
map, see \cite{ho14} page 335, and we have a similar relation between 
\eqref{DupHal2018} and $\v$ in this case. Therefore,  in Theorem \ref{jorgeduque2018} we have classified all algebraic solutions of \eqref{DupHal2018} with 
$\alpha=0$  in the Zariski open set $x_1x_2(x_1-x_2)\not=0,  y_1y_2(y_1-y_2)\not=0$. Some important ingredients of the proof of Theorem \ref{jorgeduque2018}
in the case of \eqref{DupHal2018} have been worked out
in \cite{ho07-1}. This includes the relation of \eqref{tatlis} and the period map, and 
the explicit computation of the stabilizer $\Gamma$ of a solution of \eqref{tatlis} as an explicit subgroup of $\SL 2\C$. 
The most critical part of the generalization would be to classify the set 
\begin{equation}
 C(\Gamma):=\left\{A\in \SL 2\C \Big| \ \ \ \ |\Gamma\backslash \Gamma A\Gamma|<\infty \ \ \ \ \ \right \}
\end{equation}
which we need it in Proposition \ref{twoyearsago}. 
Since we have actions of $\Gamma$ from both the left and right on $C(\Gamma)$, we are actually interested to classify 
the  double quotient $\Gamma \backslash C(\Gamma)/\Gamma$, which for $\Gamma=\SL 2\Z$ is isomorphic to $\N$ through $d\mapsto A_d$, where 
$A_d$ is the matrix in \eqref{zoorekhar2018}. 
In a personal communication A. Salehi Golsefidy 
pointed out that $C(\Gamma)$ contains the commensurability group, and even for this we do not know much beyond their Zariski-closure.
For a particular examples of $\alpha$, the 
group $\Gamma$ is the subgroup  of $\SL 2\R$ generated by 
\begin{equation}\label{trigen}\gamma_1=\left(\begin{matrix}2\cos(\frac{\pi}{m_1})&1\cr -1&0\end{matrix}\right)\,,
\gamma_2=\left(\begin{matrix}0&1\cr -1&2\cos(\frac{\pi}{m_2})\end{matrix}\right)\,,
\gamma_3=\left(\begin{matrix}1&2\cos(\frac{\pi}{m_1})+2\cos(\frac{\pi}{m_2})\cr 0&1\end{matrix}\right)
\end{equation}
for some $m_1,m_2\in \N,\ \frac{1}{m_1}+\frac{1}{m_2}<1$. This is a triangle group of type $(m_1,m_2,\infty)$. 
The generalization of Eisenstein series in this case are done in \cite{hokh2}. In this case 
it seems  that $C(\Gamma)$ is an infinite enumerable set if and only if $\Gamma$ is arithmetic. 
Note that we have a finite number of arithmetic triangle groups which are classified by Takeuchi in \cite{tak77}. 

\section{Picard's curious example}
\label{BolsoHaddad2018}
Let $\T$ be the moduli space of triples $(E,P,\omega)$, where $E$ is an elliptic curve over $\C$  and by definition it comes together with a 
point $O$, $P\not=O$ is another point in $E$ and
$\omega$ is a meromorphic differential $1$-form in $E$ with poles only at $O$ and $P$ and with the order of pole equal to 
one at both points. Moreover, the residue of $\omega$ at $P$ and $O$ is respectively $+1$ and $-1$. 
\begin{prop}
We have 
$$
\T\simeq \Pn {1,2,3,4}\backslash\{ [s:a:b:c]\in\Pn {1,2,3,4} \big| \Delta=0 \},
$$
where $\Delta:=27(-b^2+4a^ 3-ca )^ 2-c^3$, and the universal family over $\T$ is given by 
\begin{eqnarray} \label{bolsonaro2018}
E=E_{a,b,c} &:&  y^2=4x^3-cx+b^2-4a^ 3+ca, \\ \nonumber
\omega=\omega_{s,a,b} &:=& \frac{1}{2}\frac{y+b}{x-a}\frac{dx}{y}+s\frac{dx}{y},\ \  P=(a,b).
\end{eqnarray}
\end{prop}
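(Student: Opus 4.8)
The plan is to realise the isomorphism by the classifying map $\Phi$ sending $[s:a:b:c]\in\Pn{1,2,3,4}\backslash\{\Delta=0\}$ to the triple $(E_{a,b,c},(a,b),\omega_{s,a,b})$ of \eqref{bolsonaro2018}, and to invert it by reducing an arbitrary triple to Weierstrass normal form. First I would check that $\Phi$ indeed lands in $\T$: substituting $(x,y)=(a,b)$ into the equation of $E_{a,b,c}$ gives $b^2=4a^3-ca+(b^2-4a^3+ca)=b^2$, so the point $P=(a,b)$ always lies on $E_{a,b,c}$ and is distinct from the point at infinity $O$; moreover $E_{a,b,c}$ is a smooth elliptic curve exactly when the cubic $4x^3-cx+(b^2-4a^3+ca)$ has distinct roots, and its Weierstrass discriminant is $c^3-27(-b^2+4a^3-ca)^2=-\Delta$, nonzero precisely off the excised locus.

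The core of the verification is the pole structure of $\omega_{s,a,b}$. I would rewrite it as $\omega_{s,a,b}=\tfrac12\frac{dx}{x-a}+\tfrac b2\frac{1}{x-a}\cdot\frac{dx}{y}+s\frac{dx}{y}$ and compute residues locally. The holomorphic differential $\frac{dx}{y}$ is regular and nonvanishing at $O$, so the last term contributes no poles; the rational function $x-a$ has a double pole at $O$ and simple zeros at $P=(a,b)$ and $P'=(a,-b)$, so $\tfrac12\,d\log(x-a)$ has residues $-1,\tfrac12,\tfrac12$ at $O,P,P'$; and $\tfrac b2\frac{1}{x-a}\cdot\frac{dx}{y}$ vanishes to order two at $O$ (residue $0$) and has residues $\tfrac12$ at $P$ and $-\tfrac12$ at $P'$. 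Summing, $\omega_{s,a,b}$ has simple poles only at $O$ and $P$ with residues $-1$ and $+1$, as required. The degenerate case $b=0$, where $P=P'$ is a $2$-torsion point, must be handled separately: there $x-a$ has a double zero at $P$ and $\tfrac12\,d\log(x-a)$ again contributes residue $+1$.

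For surjectivity I would take an arbitrary triple $(E,P,\omega)\in\T$ and fix a Weierstrass model $y^2=4x^3-cx-g_3$ of $(E,O)$. Since $P\neq O$ it is an affine point $(a,b)$, and the incidence relation $b^2=4a^3-ca-g_3$ forces $g_3=4a^3-ca-b^2$, which is exactly the constant term of \eqref{bolsonaro2018}. Any meromorphic form on $E$ with simple poles of residues $+1,-1$ at $P,O$ and no other poles differs from the explicit third-kind form $\tfrac12\frac{y+b}{x-a}\frac{dx}{y}$ by a holomorphic differential; since $H^0(E,\Omega^1_E)=\C\cdot\frac{dx}{y}$ is one dimensional, this difference equals $s\frac{dx}{y}$ for a unique $s\in\C$. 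Hence $(E,P,\omega)\cong\Phi([s:a:b:c])$, so $\Phi$ is surjective.

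Finally, to descend $\Phi$ to an isomorphism I would identify the remaining ambiguity with the weighted $\Gm$-action. The only coordinate changes preserving the shape $y^2=4x^3+\cdots$ and fixing $O$ are $(x,y)\mapsto(\mu^2x,\mu^3y)$ with $\mu\in\Gm$; tracking these through \eqref{bolsonaro2018} sends $(s,a,b,c)$ to $(\mu s,\mu^2a,\mu^3b,\mu^4c)$ while leaving $\omega_{s,a,b}$ in the same form, which is precisely the defining action of $\Pn{1,2,3,4}$. Thus two tuples yield isomorphic triples if and only if they represent the same weighted projective point, so $\Phi$ is injective; as $\Phi$ and its inverse are algebraic, it is an isomorphism of varieties. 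The main obstacle I anticipate is the residue computation at the point at infinity $O$ together with the clean bookkeeping at the special values $j\in\{0,1728\}$, where $\mathrm{Aut}(E,O)$ is larger; these extra automorphisms are exactly the nontrivial $\Gm$-stabilisers of the corresponding tuples and are absorbed correctly by the weighted projective structure.
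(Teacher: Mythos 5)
Your proof is correct and follows essentially the same route as the paper's: reduce $(E,O)$ to the short Weierstrass form, write $P=(a,b)$ so that $\check c=4a^3-ca-b^2$ is eliminated, and identify the residual ambiguity with the scaling $(s,a,b,c)\mapsto(\mu s,\mu^2a,\mu^3b,\mu^4c)$, which is exactly the weighted $\Gm$-action defining $\Pn{1,2,3,4}$. The only difference is one of thoroughness: you verify explicitly the pole and residue structure of $\omega_{s,a,b}$ (including the $b=0$ case and the point $P'=(a,-b)$) and the surjectivity via $H^0(E,\Omega^1_E)=\C\cdot\frac{dx}{y}$, steps the paper leaves implicit in the phrase ``it follows that any triple of the moduli space $\T$ is isomorphic to a triple in \eqref{bolsonaro2018}.''
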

\begin{proof}
 We choose Weierstrass coordinates $x,y$ on $E$. These are rational functions on $E$ with pole of order 
 $2$ and $3$ at $O$, respectively. In this way we can write $E$ in the Weierstrass format $E_{c,{\check c}}: y^2=4x^3-cx-{\check c}$ with 
 $\Delta:=27{\check c}^2-c^3\not=0$. In these coordinates we write $P=(a,b)$ and it follows that
 any triple of the moduli space $\T$ is isomorphic to a triple in \eqref{bolsonaro2018} for some 
 $(s,a,b,c)\in \C^4$. Note that  ${\check c}=4a^3-ca-b^2$ and so it can be discarded.    
 For $k\in\C^*$ we have 
 \begin{eqnarray*}
  & & f:  E_{k^ {-4}c,k^ {-6}{\check c}} \simeq  E_{c,{\check c}},\\
  & & f(x,y)=(k^2x,k^ 3y),\\
  & & f_*\omega_{k^ {-1}s, k^ {-2}a,k^ {-3}b}=\omega_{s,a,b}. 
 \end{eqnarray*}
 and so $(s, a,b,c)$ and $(k^ {-1}s,  k^ {-2}a,k^ {-3}b, k^ {-4}c)$ represents the same point in $\T$.
\end{proof}
Let us consider the affine chart $s=1$ for the moduli space $\T$. 
 Let $\delta\in H_1(E_{a,b,c},\Z)$ be a continuous family of cycles. 
 \href
 { 
http://w3.impa.br/~hossein/WikiHossein/files/Singular%20Codes/2018-10-PicardCuriousDifferentialEquation.txt
 }
 {A simple, but long, calculus computation gives us the following:}
 \begin{equation}
 \label{haddad2018}
 d \int_{\delta}\left(\frac{1}{2}\frac{y+b}{x-a}\frac{dx}{y}+\frac{dx}{y}\right) =\frac{\alpha_1}{\Delta} \cdot  
 \int_\delta\frac{xdx}{y}+  \frac{\alpha_2}{\Delta} \cdot   \int_\delta\frac{ dx}{y},
 \end{equation}
 where $d$ is the differential of holomorphic functions in $(a,b,c)\in\C^3$, 
 $\alpha_i=\alpha_{1i}da+\alpha_{2i}db+\alpha_{3i}dc,\ \ i=1,2$ and $\alpha_{ij}$'s are given in 
 {\tiny
$$
\alpha:=
\left(
\begin{array}{*{2}{c}}
3c^{2}-36ca^{2}+45cab-108a^{3}b+27b^{3} & -\frac{1}{2}\left( 9c^{2}a+3c^{2}b-144ca^{3}+54ca^{2}b+9cb^{2}+432a^{5}-216a^{4}b-108a^{2}b^{2}+54ab^{3}\right) \\
(2c^{2}-30ca^{2}+6cb+72a^{4}-18ab^{2}) & -(2c^{2}a-30ca^{3}+9cab+3cb^{2}+72a^{5}-36a^{3}b-18a^{2}b^{2}+9b^{3}) \\
-\frac{1}{2}\left(3ca+3cb-36a^{3}+9b^{2}\right) & \frac{1}{4}\left( c^{2}-18ca^{2}+9cab+72a^{4}-36a^{3}b-18ab^{2}+9b^{3}\right)
\end{array}
\right).
$$
}
This matrix has rank two and the vector field 
\begin{equation}
\label{10oct2018}
\v:=
(2c-24a^{2}+6ab+6b)\frac{\partial}{\partial a}
-(3c-36a^{2}+36ab-9b^{2})\frac{\partial}{\partial b}+
(12ca+12cb-144a^{3}+36b^{2})\frac{\partial}{\partial c}
\end{equation}
is in the kernel of $\alpha_1$ and $\alpha_2$ and generates it.
This implies that along the the solutions of the vector field $\v$ in $\T$,  the integral in the left hand side of \eqref{haddad2018}
is constant for all continuous family of cycles. 

The foliation $\F(\v)$ in $\T$ has infinite number of algebraic leaves $S_1(N), N=2,3,\cdots$. The leaf $S_1(N)$
parameterizes the triples $(E,\frac{1}{N}\frac{df_N}{f_N},P)$, where $P$ is a torsion point of order $N$ and $f_N$
is a rational function in $E$ with ${\rm div}(f_N)=N\cdot (P-O)$. 
One can give a parametrization of $S_1(N)$ by modular forms as follows. We consider the complex torus $E:=\frac{\C}{\Z\tau+\Z}$ and its 
embedding in $\Pn 2$ using $z\mapsto [\wp(z,\tau): \wp'(z,\tau):1]$, where $\wp(z,\tau)$ is the Weierstrass $\wp$ function and its
derivation means with respect to $z$. We also consider the torsion point $P=\frac{1}{N}$ in $E$. 
The following function 
$$
F_N(\tau):=\frac{1}{N}\frac{f_N'}{f_N}-\frac{1}{2}\frac{\wp'(z,\tau)+ \wp'(\frac{1}{N},\tau)}{ \wp(z,\tau)-\wp(\frac{1}{N},\tau)  }
$$
is holomorphic on the torus and hence it is independent of $z$. Here, $f_N(z)$ is a double periodic function in $z$ with period $1$ and $\tau$
and it has a zero (resp. pole) of order $N$ at $z=\frac{1}{N}$ (resp. $z=0$) and $'$ means derivation with respect to $z$.  
The compactification of the curve $S_1(N)$ 
in $\Pn {1,2,3,4}$ is birational to the modular curve $X_1(N):=\Gamma_1(N)\backslash \uhp^*$,  where
$$
\Gamma_1(N):=\left\{\mat{a_1}{a_2}{a_3}{a_4}\in \SL 2\Z\Bigg| a_3\equiv 0\ a_1\equiv a_4\equiv 1,\ \ ({\rm mod }\ \  N)\right\}.
$$
Such a birational map is given by 
$$
\Gamma_1(N)\backslash \uhp^*\to \Pn {1,2,3,4},\ \tau\mapsto 
\left[F_N(\tau):\wp\left(\frac{1}{N},\tau\right): \wp'\left(\frac{1}{N},\tau\right): 60G_4(\tau)\right].
$$
The four functions involved in the above parameterization are modular forms for $\Gamma_1(N)$.
The precise comparision of our Picard's differential equation given by $\v$ in \eqref{10oct2018} 
and the Picard's differential  equation in \cite{Picard1889} pages 298-299  is left to the reader.
For the line bundle $L:=\O(P-O)$ on $E=E_{a,b,c}$  with its global meromorphic  section $s$ such that ${\rm div}(s)=P-O$, 
we can associate the holomorphic connection $\nabla: L\to \Omega_E\otimes L,\ s\mapsto \omega_{s,a,b}\otimes s$. Isomonodromic
defomrations of $(E,\nabla)$ is the same as deformations with constant integrals in the left hand side of 
\eqref{haddad2018}. This and   \cite{Loray2016} Corollary 2 and 3 have been the starting point of our reformulation
of Picard's example.


\section{Final comments}
In the present text we have avoided the arithmetic of modular curves which is a vast territory of research with fruitful applications
such as arithmetic modularity theorem. There are few topics which would fit perfectly into this article and we mention them briefly. 
Using geometric Hecke operators for (quasi) modular forms, see for instance \cite{ho14-II} page 432, 
one can prove that $Q_{d,i}$ have coefficients in $\Z[\frac{1}{6}]$
\href{http://w3.impa.br/~hossein/WikiHossein/files/Singular%20Codes/2018-06-Katz-Grothendieck.txt
}
{which might be used for mod $p$ study of modular curves.} 
For arithmetic purposes such as those in \cite{DeligneRapoport1973} it would be essential to classify the bad primes of 
$S_0(d)$ and the  classification of fibers over bad primes. One might use a desingularization of $\F$ along the curve singularity so 
that one gets smooth models 
of modular curves.
We started our article from a probelm posed by Darboux in \cite{Darboux1978} and elaborated a counterexample to this probelm. Surprisingly, one main ingredient
of this counterexample is the  differential equation \eqref{tatlis} with $\alpha=0$  which Darboux himself derived in the article \cite{da78}; both articles being 
published in 1878.

\def\cprime{$'$} \def\cprime{$'$} \def\cprime{$'$} \def\cprime{$'$}


\end{document}